\newtheorem{theorem}{Theorem}[section]
\newtheorem{corollary}[theorem] {Corollary}
\newtheorem{definition}[theorem]{Definition}
\newtheorem{proposition}[theorem]{Proposition}
\newtheorem{remark}[theorem]{Remark}
\title{This is the title}
\begin{document}
\begin{center}
{\bf{MULTIPLIERS FOR  LIPSCHITZ p-BESSEL SEQUENCES IN METRIC  SPACES}}\\
K. MAHESH KRISHNA AND P. SAM JOHNSON  \\
Department of Mathematical and Computational Sciences\\ 
National Institute of Technology Karnataka (NITK), Surathkal\\
Mangaluru 575 025, India  \\
Emails: kmaheshak@gmail.com, kmaheshakma16f02@nitk.edu.in\\
nitksam@gmail.com, sam@nitk.edu.in

Date: \today
\end{center}

\hrule
\vspace{0.5cm}
\textbf{Abstract}: The notion of multipliers in Hilbert space was introduced by
Schatten in 1960 using orthonormal sequences and  was generalized by Balazs in
2007 using Bessel sequences. This was  extended to Banach spaces by Rahimi and
Balazs in 2010 using p-Bessel sequences. In this paper, we further extend this
by considering Lipschitz functions. On the way we define frames for metric
spaces which extends the notion of frames and Bessel sequences for Banach spaces. We show that when the symbol sequence converges to zero, the
multiplier is a Lipschitz compact operator. We study how the variation of
parameters in the multiplier effects the properties of multiplier.

\textbf{Keywords}:  Multiplier, Lipschitz operator, Lipschitz compact  operator, frame, Bessel sequence.

\textbf{Mathematics Subject Classification (2020)}:  42C15, 26A16.

\section{Introduction}

Let $\{\lambda_n\}_n \in \ell^\infty(\mathbb{N})$ and  $\{x_n\}_n$, $\{y_n\}_n$ be
sequences in a Hilbert space $\mathcal{H}$. For $x,y \in \mathcal{H}$, the
operator $x\otimes \overline{y}$ is defined by  	$x\otimes
\overline{y}:\mathcal{H}\ni h \mapsto \langle h, y\rangle x\in \mathcal{H}$.

The study of operators of the form 
 \begin{align}\label{FIRST EQUATION}
 	\sum_{n=1}^{\infty}\lambda_n (x_n\otimes \overline{y_n})
 \end{align}
 began with Schatten \cite{SCHATTEN}, in connection with the study of compact
operators. Schatten studied the operator in (\ref{FIRST EQUATION}) whenever
$\{x_n\}_n$, $\{y_n\}_n$ are orthonormal sequences in a Hilbert space
$\mathcal{H}$. Later,  operators in (\ref{FIRST EQUATION}) are studied mainly in
connection with Gabor analysis \cite{FEICHTINGERNOWAK, BENEDETTOPFANDER,
DORFLERTORRESANI, GIBSONLAMOUREUXMARGRAVE, CORDEROGROCHENIG, SKRETTINGLAND}. This was generalized by Balazs
\cite{BALAZSBASIC} who replaced orthonormal sequences by Bessel sequences (we
refer \cite{CHRISTENSEN, HEIL} for Bessel sequences). Balazs and Stoeva  studied
these operators in \cite{BALAZSREP, STOEVABALAZSINVER, STOEVABALAZSINVERONTHE,
STOEVABALAZSRIESZ, STOEVABALAZSCANONICAL, STOEVABALAZSWEIGHTED,
STOEVABALAZSDETAILED}.

Let $\{f_n\}_n$
be a sequence in  the  dual space $\mathcal{X}^*$ of a Banach
space $\mathcal{X}$ and $\{\tau_n\}_n$ be a sequence in a Banach space
$\mathcal{Y}$. The operator $\tau \otimes f$ is defined by  	$\tau
\otimes f:\mathcal{X}\ni x \mapsto f(x)\tau\in \mathcal{Y}$. It was  Rahimi and
Balazs \cite{RAHIMIBALAZSMUL} who extended the operator in (\ref{FIRST
EQUATION}) from Hilbert spaces to Banach spaces. For a Banach space
$\mathcal{X}$, and dual $\mathcal{X}^*$, they considered the operator 
\begin{align}\label{SECOND EQUATION}
\sum_{n=1}^{\infty}\lambda_n (\tau_n\otimes f_n).
\end{align}
Rahimi and Balazs  studied the operator in (\ref{SECOND EQUATION}), whenever
$\{\tau_n\}_n$ p-Bessel sequence (we refer \cite{CASAZZACHRISTENSENSTOEVA, CHRISTENSENSTOEVA} for p-Bessel sequences) for
$\mathcal{X}^*$ and $\{f_n\}_n$ q-Bessel sequence for $\mathcal{X}$ ($q$ is
conjugate index of $p$). Besides theoretical importance, multipliers also play
important role in frame (particularly Gabor) multipliers
\cite{FEICHTINGERNOWAK}, signal processing \cite{MATZHLAWATSCH}, computational
auditory scene analysis \cite{WANGBROWN}, sound synthesis \cite{DEPALLE},
psychoacoustics \cite{BALAZSLABACK}, etc.

 In the present paper, we attempt to study the non-linear version of
operator in (\ref{SECOND EQUATION}). In Section \ref{PRELIMINARIES} we recall
necessary  definitions and results which we use. In Section \ref{MULTIPLIERSSEC} we give
definitions of frames for metric spaces, definition of multiplier for metric spaces and study the properties of multiplier.

 \section{Preliminaries}\label{PRELIMINARIES}
 In Hilbert spaces, a Riesz basis is defined as an image of an orthonormal basis
under an invertible operator \cite{CHRISTENSEN}. In order to define Riesz basis
for Banach spaces,  one has  to look for characterizations not involving inner
product.  Following is one such charaterization.
 \begin{theorem}\cite{CHRISTENSEN}
For a sequence $\{\tau_n\}_{n}$ in a Hilbert space $\mathcal{H}$, the following
are equivalent. 
  \begin{enumerate}[\upshape(i)]
\item $\{\tau_n\}_{n}$ is a Riesz basis for $\mathcal{H}$.
 	\item $\overline{\operatorname{span}}\{\tau_n\}_{n}=\mathcal{H}$ and 
there exist $a,b>0$ such that for every finite subset $\mathbb{S}$ of
$\mathbb{N}$, 
 	\begin{align*}
 	a \left(\sum_{n \in \mathbb{S}}|c_n|^2\right)^\frac{1}{2}\leq
\left\|\sum_{n \in \mathbb{S}}c_n\tau_n\right\|\leq b \left(\sum_{n \in
\mathbb{S}}|c_n|^2\right)^\frac{1}{2}, \quad \forall c_n \in \mathbb{K}.
 	\end{align*}
 \end{enumerate}	
  \end{theorem}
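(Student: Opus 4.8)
The plan is to prove the two implications separately, relying on the definition of a Riesz basis recalled above: $\{\tau_n\}_n$ is a Riesz basis if and only if $\tau_n=Ue_n$ for some orthonormal basis $\{e_n\}_n$ of $\mathcal{H}$ and some bounded invertible operator $U:\mathcal{H}\to\mathcal{H}$.

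\textbf{(i) $\Rightarrow$ (ii).} Write $\tau_n=Ue_n$ as above. Since $U$ is bounded, it maps $\overline{\operatorname{span}}\{e_n\}_n$ into $\overline{\operatorname{span}}\{\tau_n\}_n$; since $U$ is surjective, $\mathcal{H}=U(\mathcal{H})=U\big(\overline{\operatorname{span}}\{e_n\}_n\big)\subseteq\overline{\operatorname{span}}\{\tau_n\}_n$, which gives the density claim. For the two-sided estimate, fix a finite $\mathbb{S}\subseteq\mathbb{N}$ and scalars $c_n\in\mathbb{K}$, and set $v=\sum_{n\in\mathbb{S}}c_ne_n$; orthonormality gives $\|v\|=\big(\sum_{n\in\mathbb{S}}|c_n|^2\big)^{1/2}$, while $\sum_{n\in\mathbb{S}}c_n\tau_n=Uv$. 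Combining this with $\|U^{-1}\|^{-1}\|v\|\le\|Uv\|\le\|U\|\|v\|$ yields the inequalities with $a=\|U^{-1}\|^{-1}$ and $b=\|U\|$.

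\textbf{(ii) $\Rightarrow$ (i).} The first step is to observe that for every $\{c_n\}_n\in\ell^2(\mathbb{N})$ the series $\sum_{n}c_n\tau_n$ converges in $\mathcal{H}$: applying the upper estimate to a finite block $\mathbb{S}=\{m,m+1,\dots,k\}$ bounds $\big\|\sum_{n=m}^{k}c_n\tau_n\big\|$ by $b\big(\sum_{n=m}^{k}|c_n|^2\big)^{1/2}$, so the partial sums are Cauchy and $\mathcal{H}$ is complete. Hence the synthesis operator $T:\ell^2(\mathbb{N})\to\mathcal{H}$, $T\{c_n\}_n=\sum_{n}c_n\tau_n$, is well defined and linear. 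Taking $\mathbb{S}=\{1,\dots,k\}$ in the hypothesis and letting $k\to\infty$, continuity of the norm promotes the finite inequalities to $a\|\{c_n\}_n\|_{\ell^2}\le\|T\{c_n\}_n\|\le b\|\{c_n\}_n\|_{\ell^2}$. Thus $T$ is bounded and bounded below, hence injective with closed range; since the range of $T$ contains $\operatorname{span}\{\tau_n\}_n$ and is closed, it contains $\overline{\operatorname{span}}\{\tau_n\}_n=\mathcal{H}$, so $T$ is onto. A bounded-below surjection has bounded inverse, so $T$ is an isomorphism of $\ell^2(\mathbb{N})$ onto $\mathcal{H}$. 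Finally, $\mathcal{H}$ is separable (it is the closed span of a countable set), so it carries an orthonormal basis $\{e_n\}_n$; letting $V:\mathcal{H}\to\ell^2(\mathbb{N})$ be the unitary sending $e_n$ to the $n$-th standard basis vector, the operator $U:=TV:\mathcal{H}\to\mathcal{H}$ is bounded and invertible with $Ue_n=\tau_n$, so $\{\tau_n\}_n$ is a Riesz basis.

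The routine content is just bookkeeping; the only points requiring a little care are the convergence of $\sum_n c_n\tau_n$ for arbitrary $\ell^2$-coefficients and the passage from finite sums to infinite sums in the norm estimates, both dispatched via a Cauchy-sequence argument and continuity of the norm. The one structural ingredient worth flagging is the use of the bounded inverse theorem (equivalently, that a bounded-below bijection between Banach spaces is an isomorphism) to conclude $T^{-1}$ is bounded in the $(ii)\Rightarrow(i)$ direction.
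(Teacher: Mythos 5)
The paper does not prove this statement; it is quoted from Christensen's book as background, so there is no internal proof to compare against. Your argument is the standard textbook one (synthesis operator $T:\ell^2(\mathbb{N})\to\mathcal{H}$, bounded above and below, surjective by density of the span, composed with a unitary identifying $\ell^2(\mathbb{N})$ with $\mathcal{H}$) and it is correct in both directions. One small simplification: you do not need the bounded inverse theorem in (ii) $\Rightarrow$ (i), since the lower bound $a\|\{c_n\}_n\|_{\ell^2}\leq\|T\{c_n\}_n\|$ already gives $\|T^{-1}\|\leq a^{-1}$ once surjectivity is established.
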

 
  \begin{definition}\cite{ALDROUBISUNTANG}
 Let $1<q<\infty$ and $\mathcal{X}$ be a Banach space. A collection 	$\{\tau_n\}_{n}$  in $\mathcal{X}$ is said to be a 
 \begin{enumerate}[\upshape(i)]
 	\item q-Riesz sequence for $\mathcal{X}$ if there exist $a,b>0$ such that for every finite subset $\mathbb{S}$ of $\mathbb{N}$, 
 	\begin{align}\label{RIESZSEQUENCEINEQUALITY}
 	a \left(\sum_{n \in \mathbb{S}}|c_n|^q\right)^\frac{1}{q}\leq \left\|\sum_{n \in \mathbb{S}}c_n\tau_n\right\|\leq b \left(\sum_{n \in \mathbb{S}}|c_n|^q\right)^\frac{1}{q}, \quad \forall c_n \in \mathbb{K}.
 	\end{align}
 	\item q-Riesz basis for $\mathcal{X}$ if it is a q-Riesz sequence  for
$\mathcal{X}$ and $\overline{\operatorname{span}}\{\tau_n\}_{n}=\mathcal{X}$.
 \end{enumerate}
  \end{definition}
  We now recall the definition of a frame for a Hilbert space.
  \begin{definition}\cite{CHRISTENSEN}\label{FRAMEDEFINITION}
  	A collection $\{\tau_n\}_{n}$ in a Hilbert space $\mathcal{H}$ is said to be a frame for $\mathcal{H}$ if there exist $a,b>0$ such that 
  	\begin{align*}
  	a\|h\|^2\leq \sum_{n=1}^{\infty}|\langle h, \tau_n\rangle|^2\leq b \|h\|^2, \quad \forall h \in \mathcal{H}.
  	\end{align*}
  \end{definition}
  By realizing that the functional $\mathcal{H} \ni h \mapsto \langle h, \tau_n\rangle \in \mathbb{K}$ is bounded linear, Definition \ref{FRAMEDEFINITION} leads to the following in Banach spaces.
  \begin{definition}\cite{ALDROUBISUNTANG, CHRISTENSENSTOEVA}
  	Let $1<p<\infty$ and $\mathcal{X}$ be a Banach space. 
  \begin{enumerate}[\upshape(i)]
  	\item A collection 	$\{f_n\}_{n}$ of bounded linear functionals in $\mathcal{X}^*$ is said to be a p-frame for $\mathcal{X}$ if there exist $a,b>0$ such that 	
  	\begin{align*}
  	a\|x\|\leq \left(\sum_{n=1}^{\infty}|f_n(x)|^p\right)^\frac{1}{p}\leq b\|x\|,\quad \forall x \in \mathcal{X}.
  	\end{align*}
  	\item A collection 	$\{\tau_n\}_{n}$  in $\mathcal{X}$ is said to
be a p-frame for $\mathcal{X}^*$ if there exist $a,b>0$ such that 
  	\begin{align*}
  	a\|f\|\leq \left(\sum_{n=1}^{\infty}|f(\tau_n)|^p\right)^\frac{1}{p}\leq b\|f\|,\quad \forall f \in \mathcal{X}^*.
  	\end{align*}
  \end{enumerate}	
  \end{definition}
 For more  about p-frames for Banach spaces we refer \cite{STOEVAON,
CASAZZACHRISTENSENSTOEVA, STOEVAPERT, STOEVAGEN, STOEVACON, STOEVACONXD}.

 We
now recall the definition of Lipschitz function. Let $\mathcal{M}$, 
$\mathcal{N}$ be  metric spaces. A function $f:\mathcal{M}  \rightarrow
\mathcal{N}$ is said to be Lipschitz if there exists $b> 0$ such that 
  	\begin{align*}
  	d(f(x), f(y)) \leq b\, d(x,y), \quad \forall x, y \in \mathcal{M}.
  	\end{align*}
   \begin{definition}\cite{WEAVER}
   	Let	$\mathcal{X}$ be a Banach space.
   	\begin{enumerate}[\upshape(i)]
   		\item Let $\mathcal{M}$ be a  metric space. The collection 	$\operatorname{Lip}(\mathcal{M}, \mathcal{X})$
   		is defined as $\operatorname{Lip}(\mathcal{M}, \mathcal{X})\coloneqq \{f:f:\mathcal{M} 
   		\rightarrow \mathcal{X}  \operatorname{ is ~ Lipschitz} \}.$ For $f \in \operatorname{Lip}(\mathcal{M}, \mathcal{X})$, the Lipschitz number 
   		is defined as 
   		\begin{align*}
   		\operatorname{Lip}(f)\coloneqq \sup_{x, y \in \mathcal{M}, x\neq
   			y} \frac{\|f(x)-f(y)\|}{d(x,y)}.
   		\end{align*}
   		\item Let $(\mathcal{M}, 0)$ be a pointed metric space. The collection 	$\operatorname{Lip}_0(\mathcal{M}, \mathcal{X})$
   		is defined as $\operatorname{Lip}_0(\mathcal{M}, \mathcal{X})\coloneqq \{f:f:\mathcal{M} 
   		\rightarrow \mathcal{X}  \operatorname{ is ~ Lipschitz ~ and } f(0)=0\}.$
   		For $f \in \operatorname{Lip}_0(\mathcal{M}, \mathcal{X})$, the Lipschitz norm
   		is defined as 
   		\begin{align*}
   		\|f\|_{\operatorname{Lip}_0}\coloneqq \sup_{x, y \in \mathcal{M}, x\neq
   			y} \frac{\|f(x)-f(y)\|}{d(x,y)}.
   		\end{align*}
   	\end{enumerate}

   \end{definition}
   
   \begin{theorem}\cite{WEAVER}
   	Let	$\mathcal{X}$ be a Banach space.
   	\begin{enumerate}[\upshape(i)]
   		\item If $\mathcal{M}$ is a  metric space, then 	$\operatorname{Lip}(\mathcal{M},
   		\mathcal{X})$ is a semi-normed vector  space w.r.t. the semi-norm  $\operatorname{Lip}(\cdot)$.
   		\item If $(\mathcal{M}, 0)$ is a pointed metric space, then 	$\operatorname{Lip}_0(\mathcal{M},
   		\mathcal{X})$ is a Banach space w.r.t. the  norm
   		$\|\cdot\|_{\operatorname{Lip}_0}$.
   	\end{enumerate}
   \end{theorem}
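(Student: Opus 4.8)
The plan is to handle the two parts separately: part (i) is a routine verification that the Lipschitz constant behaves like a semi-norm, while part (ii) requires a genuine completeness argument, and that is where essentially all the work lies. For (i), I would first check that $\operatorname{Lip}(\mathcal{M},\mathcal{X})$ is a vector space: if $f,g$ are Lipschitz with constants $b_f,b_g$ and $\alpha\in\mathbb{K}$, then the triangle inequality in $\mathcal{X}$ gives $\|(f+g)(x)-(f+g)(y)\|\le (b_f+b_g)\,d(x,y)$, while $\|(\alpha f)(x)-(\alpha f)(y)\|=|\alpha|\,\|f(x)-f(y)\|\le |\alpha|b_f\,d(x,y)$, so both $f+g$ and $\alpha f$ are Lipschitz. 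Taking suprema over $x\neq y$ in these same two estimates yields $\operatorname{Lip}(f+g)\le\operatorname{Lip}(f)+\operatorname{Lip}(g)$ and $\operatorname{Lip}(\alpha f)=|\alpha|\operatorname{Lip}(f)$; together with $\operatorname{Lip}(f)\ge 0$ this is exactly the statement that $\operatorname{Lip}(\cdot)$ is a semi-norm. It fails to be a norm precisely because every constant map has Lipschitz number $0$.

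For (ii), I would first observe that on $\operatorname{Lip}_0(\mathcal{M},\mathcal{X})$ the semi-norm upgrades to a norm: $\|f\|_{\operatorname{Lip}_0}=0$ forces $f$ to be constant, and the constraint $f(0)=0$ then forces $f\equiv 0$. The substance is completeness. Let $\{f_n\}_n$ be Cauchy in $(\operatorname{Lip}_0(\mathcal{M},\mathcal{X}),\|\cdot\|_{\operatorname{Lip}_0})$. For a fixed $x\in\mathcal{M}$, the basepoint condition gives
\begin{align*}
\|f_n(x)-f_m(x)\|=\|(f_n-f_m)(x)-(f_n-f_m)(0)\|\le \|f_n-f_m\|_{\operatorname{Lip}_0}\,d(x,0),
\end{align*}
so $\{f_n(x)\}_n$ is Cauchy in $\mathcal{X}$; since $\mathcal{X}$ is complete it converges, and I define $f(x)\coloneqq\lim_n f_n(x)$. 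Then $f(0)=\lim_n f_n(0)=0$. Since Cauchy sequences are bounded, $M\coloneqq\sup_n\|f_n\|_{\operatorname{Lip}_0}<\infty$, and letting $n\to\infty$ in $\|f_n(x)-f_n(y)\|\le M\,d(x,y)$ (using continuity of the norm on $\mathcal{X}$) shows $f\in\operatorname{Lip}_0(\mathcal{M},\mathcal{X})$ with $\|f\|_{\operatorname{Lip}_0}\le M$.

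It remains to show $f_n\to f$ in the Lipschitz norm. Given $\varepsilon>0$, choose $N$ with $\|f_n-f_m\|_{\operatorname{Lip}_0}\le\varepsilon$ whenever $n,m\ge N$. Fix $n\ge N$ and a pair $x\neq y$ in $\mathcal{M}$; then for every $m\ge N$,
\begin{align*}
\|(f_n-f_m)(x)-(f_n-f_m)(y)\|\le\varepsilon\,d(x,y),
\end{align*}
and letting $m\to\infty$, using the pointwise convergences $f_m(x)\to f(x)$, $f_m(y)\to f(y)$ and continuity of the vector operations in $\mathcal{X}$, gives $\|(f_n-f)(x)-(f_n-f)(y)\|\le\varepsilon\,d(x,y)$. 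Taking the supremum over $x\neq y$ yields $\|f_n-f\|_{\operatorname{Lip}_0}\le\varepsilon$ for all $n\ge N$, which is the desired convergence. The only point requiring care is the order of limits in this last step: one must fix the pair $(x,y)$, pass to the limit in $m$ there, and only afterwards take the supremum over all pairs — so the main (and rather mild) obstacle is this bookkeeping, not any substantial analytic difficulty.
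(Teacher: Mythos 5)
Your proof is correct and is the standard argument (pointwise convergence via the basepoint condition and completeness of $\mathcal{X}$, then upgrading to convergence in the Lipschitz norm by fixing a pair $(x,y)$ before taking the supremum); the paper itself offers no proof, since it simply quotes this result from Weaver's book, where essentially the same argument appears. Nothing to correct.
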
	
 The spaces $\operatorname{Lip}(\mathcal{M},
 \mathcal{X})$ and $\operatorname{Lip}_0(\mathcal{M},
 \mathcal{X})$  are well-studied and we refer \cite{WEAVER,
COBZASMICULESCUNICOLAE, PIASECKI, KALTONGODEFROY, KALTON} for further
information.

In the theory of bounded linear operators between Banach spaces, an operator is
said to be compact if the image of the unit ball under the operator is
precompact  \cite{FABIAN}. Linearity of the operator now gives various
 charaterizaions of
compactness and plays important role in rich theories such as theory of integral equations \cite{DEBNATHMIKUSINSKI}, spectral
theory \cite{BEAUZAMY}, theory of Fredholm operators \cite{EDMUNDSEVANS}, operator algebra (C*-algebra) \cite{DAVIDSON},
K-theory \cite{RORDAM}, Calkin algebra \cite{CARADUSPFAFFENBERGERYOOD}, (operator) ideal theory \cite{PIETSCH},  approximation
properties of Banach spaces \cite{JOHNSONLINDENSTRAUSS}, Schauder basis theory  \cite{JOHNSONLINDENSTRAUSS}. Lack of linearity is a
hurdle when one tries to define compactness of non-linear maps. This hurdle was
sucessefully crossed in the paper which began the study of Lipschitz compact
operators. We now record these things which are necessary in the paper.
 \begin{definition}\cite{JIMENEZSEPUILCREMOISES}
 If $\mathcal{M}$ is a  metric space and $\mathcal{X}$ is a Banach space, then the  Lipschitz image of a Lipschitz map (also called as Lipschitz operator) $f:\mathcal{M}\rightarrow \mathcal{X}$ is defined as the set 
 \begin{align}\label{LIPSCHITZIMAGE}
 \left\{\frac{f(x)-f(y)}{d(x,y)}:x, y \in \mathcal{M}, x\neq y\right\}.
 \end{align}
 \end{definition}
We observe that whenever an operator is linear, the set in (\ref{LIPSCHITZIMAGE}) is simply the image
of the unit sphere.
 \begin{definition}\cite{JIMENEZSEPUILCREMOISES}
 	If $(\mathcal{M}, 0)$ is a pointed metric space and $\mathcal{X}$ is a
Banach space, then a Lipschitz map  $f:\mathcal{M}\rightarrow \mathcal{X}$ such
that $f(0)=0$ is said to be Lipschitz compact if its Lipschitz image is
relatively compact  in $\mathcal{X}$, i.e., the closure of the set in 
(\ref{LIPSCHITZIMAGE}) is compact  in $\mathcal{X}$.
 \end{definition}
 As showed in \cite{JIMENEZSEPUILCREMOISES}, there is a large collection of Lipschitz compact operators. To state this, first we need a definition.
 \begin{definition}\cite{CHENZHENG}
 Let $(\mathcal{M}, 0)$ be a pointed metric space and $\mathcal{X}$ be a Banach space. A Lipschitz operator $f:\mathcal{M}\rightarrow \mathcal{X}$ such that $f(0)=0$ is said to be strongly Lipschitz p-nuclear ($1\leq p <\infty$) if there exist operators $A \in \mathcal{B}(\ell^p(\mathbb{N}), \mathcal{X})$, $g \in \operatorname{Lip}_0(\mathcal{M},
 \ell^\infty(\mathbb{N}))$ and a diagonal operator $M_\lambda \in \mathcal{B}(\ell^\infty(\mathbb{N}), \ell^p(\mathbb{N}))$ induced by a sequence $\lambda \in \ell^p(\mathbb{N})$ such that $f=AM_\lambda g$, i.e., the following diagram commutes.
\begin{center}
\[
\begin{tikzcd}
\mathcal{M} \arrow[r, "f"]\arrow[d, "g"]& \mathcal{X} \\
\ell^\infty(\mathbb{N})\arrow[r, "M_\lambda" ]& \ell^p(\mathbb{N})\arrow[u, "A"
]
\end{tikzcd}	
\]
\end{center}

 \end{definition} 
 \begin{proposition}\cite{JIMENEZSEPUILCREMOISES}
 	Every strongly Lipschitz p-nuclear operator from a pointed metric space to a Banach space is Lipschitz compact.
 \end{proposition}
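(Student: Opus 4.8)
The plan is to reduce the Lipschitz compactness of $f$ to the compactness of the bounded linear operator $AM_\lambda\colon\ell^\infty(\mathbb{N})\to\mathcal{X}$, and then to establish the latter by a standard finite-rank approximation argument.

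First I would observe that, because $A$ and $M_\lambda$ are linear and $f=AM_\lambda g$, for all $x,y\in\mathcal{M}$ with $x\neq y$ we have
\[
\frac{f(x)-f(y)}{d(x,y)}=AM_\lambda\!\left(\frac{g(x)-g(y)}{d(x,y)}\right).
\]
Since $g\in\operatorname{Lip}_0(\mathcal{M},\ell^\infty(\mathbb{N}))$, each vector $\frac{g(x)-g(y)}{d(x,y)}$ has norm at most $\|g\|_{\operatorname{Lip}_0}$, so the Lipschitz image of $f$ is contained in $AM_\lambda(B)$, where $B$ is the closed ball of radius $\|g\|_{\operatorname{Lip}_0}$ in $\ell^\infty(\mathbb{N})$. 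Note also that $f(0)=AM_\lambda g(0)=0$ since $g(0)=0$. Hence it suffices to prove that $AM_\lambda(B)$ is relatively compact in $\mathcal{X}$, and for this it is enough to show that $AM_\lambda$ is a compact operator; since $A$ is bounded, it is in turn enough to show that $M_\lambda\colon\ell^\infty(\mathbb{N})\to\ell^p(\mathbb{N})$ is compact.

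To show $M_\lambda$ is compact, I would approximate it by the finite-rank diagonal operators $M_{\lambda^{(N)}}$ induced by the truncations $\lambda^{(N)}=(\lambda_1,\dots,\lambda_N,0,0,\dots)$. A direct computation shows that for any $\mu\in\ell^p(\mathbb{N})$ the diagonal operator $M_\mu\colon\ell^\infty(\mathbb{N})\to\ell^p(\mathbb{N})$ has operator norm exactly $\|\mu\|_p$ (the upper bound is immediate from $\|M_\mu c\|_p\le\|c\|_\infty\|\mu\|_p$, and testing on the constant sequence $c=(1,1,\dots)$ gives the reverse inequality). Therefore $\|M_\lambda-M_{\lambda^{(N)}}\|=\big(\sum_{n>N}|\lambda_n|^p\big)^{1/p}\to 0$ as $N\to\infty$ because $\lambda\in\ell^p(\mathbb{N})$. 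Each $M_{\lambda^{(N)}}$ has finite-dimensional range and is hence compact, and the operator-norm limit of a sequence of compact operators is compact, so $M_\lambda$ is compact. Consequently $AM_\lambda$ is compact, $AM_\lambda(B)$ is relatively compact, and the Lipschitz image of $f$, being a subset of $AM_\lambda(B)$, is relatively compact; that is, $f$ is Lipschitz compact.

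I do not anticipate a serious obstacle. The only mildly delicate point is the bookkeeping in the reduction — recognizing that passing to the Lipschitz quotients $\frac{g(x)-g(y)}{d(x,y)}$ keeps us inside a fixed ball of $\ell^\infty(\mathbb{N})$, so that all the non-linearity of $f$ is absorbed into $g$ and the compactness question becomes the purely linear one for $M_\lambda$. Everything after that reduction is the classical fact that a diagonal operator with $\ell^p$ symbol is compact as a map from $\ell^\infty(\mathbb{N})$ to $\ell^p(\mathbb{N})$.
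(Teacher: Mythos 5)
Your proof is correct. The paper states this proposition only as a citation to the reference of Jim\'enez-Vargas, Sepulcre and Villegas-Vallecillos and gives no proof of its own, so there is nothing in the paper to compare against; your argument --- reducing the Lipschitz image of $f=AM_\lambda g$ to the image under $AM_\lambda$ of a bounded subset of $\ell^\infty(\mathbb{N})$, and then verifying that the diagonal operator $M_\lambda\colon\ell^\infty(\mathbb{N})\to\ell^p(\mathbb{N})$ with $\lambda\in\ell^p(\mathbb{N})$ is compact via finite-rank truncations --- is the standard and complete way to establish it.
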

 Since the image of a linear operator is a subspace, the natural definition of finite rank operator is that image is a finite dimensional subspace. The image of Lipschitz map may not be a subspace. Thus care has to be taken while defining rank of such maps.
 \begin{definition}\cite{JIMENEZSEPUILCREMOISES}\label{LFR}
 	If $(\mathcal{M}, 0)$ is a pointed metric space and $\mathcal{X}$ is a
Banach space, then a Lipschitz function $f:\mathcal{M}\rightarrow \mathcal{X}$
such that $f(0)=0$ is said to have Lipschitz finite dimensional rank if the linear
hull of its Lipschitz image is a finite dimensional subspace of $\mathcal{X}$.
 \end{definition}
 \begin{definition}\cite{JIMENEZSEPUILCREMOISES}\label{FR}
 	If $\mathcal{M}$ is a  metric space and $\mathcal{X}$ is a Banach space, then a Lipschitz function $f:\mathcal{M}\rightarrow \mathcal{X}$  is said to have  finite dimensional rank if the linear hull of its  image is a finite dimensional subspace of $\mathcal{X}$.
 \end{definition}
Next theorem shows that for pointed metric spaces, Definitions \ref{LFR} and \ref{FR}   are
equivalent. 
 \begin{theorem}\cite{JIMENEZSEPUILCREMOISES, ACHOUR}\label{LIPSCHITCOMPACTIFFLINEAR}
 	Let  $(\mathcal{M}, 0)$ be a pointed metric space and $\mathcal{X}$ be a Banach space.  For  a Lipschitz function $f:\mathcal{M}\rightarrow \mathcal{X}$ such that $f(0)=0$ the following are equivalent.
 \begin{enumerate}[\upshape(i)]
 	\item $f$ has Lipschitz finite dimensional rank.
 	\item $f$ has finite dimensional rank.
 	\item There exist $ f_1, \dots, f_n$ in  $\operatorname{Lip}_0(\mathcal{M}, \mathbb{K})$ and $\tau_1, \dots, \tau_n$ in $\mathcal{X}$ such that
 	\begin{align*}
 	f(x)=\sum_{k=1}^{n}f_k(x)\tau_k, \quad \forall x \in \mathcal{M}.
 	\end{align*} 
 \end{enumerate}	
 \end{theorem}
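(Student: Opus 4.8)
The plan is to run the implications (iii) $\Rightarrow$ (ii) and (ii) $\Rightarrow$ (iii), and to establish separately the identity that the linear hull of the Lipschitz image of $f$ equals the linear hull of the image of $f$; the latter identity makes (i) $\Leftrightarrow$ (ii) automatic. Throughout, write $V$ for the linear span in $\mathcal{X}$ of the set in (\ref{LIPSCHITZIMAGE}) and $W$ for the linear span of $\{f(x):x\in\mathcal{M}\}$.

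First I would prove $V=W$. For any $x\neq y$ in $\mathcal{M}$, the element $\frac{f(x)-f(y)}{d(x,y)}$ is the linear combination $\frac{1}{d(x,y)}f(x)-\frac{1}{d(x,y)}f(y)$ of members of $W$, so $V\subseteq W$. Conversely, the base point condition $f(0)=0$ gives, for every $x\neq 0$, the identity $f(x)=d(x,0)\cdot\frac{f(x)-f(0)}{d(x,0)}$, a scalar multiple of an element of the Lipschitz image; since $f(0)=0$ belongs to every linear subspace, this yields $W\subseteq V$. Hence $V=W$, and in particular $\dim V<\infty$ if and only if $\dim W<\infty$, which is precisely the equivalence of (i) and (ii). The implication (iii) $\Rightarrow$ (ii) is then immediate: if $f(x)=\sum_{k=1}^n f_k(x)\tau_k$ for all $x$, then the image of $f$ is contained in $\operatorname{span}\{\tau_1,\dots,\tau_n\}$, so its linear hull is finite dimensional.

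For (ii) $\Rightarrow$ (iii), assume $W$ is finite dimensional; if $W=\{0\}$ then $f\equiv 0$ and there is nothing to prove, so fix a basis $\tau_1,\dots,\tau_n$ of $W$ and let $\pi_1,\dots,\pi_n:W\to\mathbb{K}$ be the associated coordinate functionals. These are linear, and since $W$ is finite dimensional they are bounded. Put $f_k\coloneqq\pi_k\circ f:\mathcal{M}\to\mathbb{K}$. By the definition of the $\pi_k$ we get $f(x)=\sum_{k=1}^n f_k(x)\tau_k$ for every $x\in\mathcal{M}$. Each $f_k$ is the composition of the Lipschitz map $f$ with the bounded linear (hence Lipschitz) map $\pi_k$, so $f_k$ is Lipschitz, and $f_k(0)=\pi_k(f(0))=\pi_k(0)=0$; thus $f_k\in\operatorname{Lip}_0(\mathcal{M},\mathbb{K})$, which is (iii).

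No step is a genuine obstacle; the two points that need a little care are the use of the base point condition $f(0)=0$ in deducing $W\subseteq V$ (this is the only place "pointed" is really used, and it is why an analogous statement for plain metric spaces would need the separate Definition \ref{FR}), and the observation that coordinate functionals on the finite dimensional space $W$ are automatically bounded, so that their compositions with $f$ stay Lipschitz.
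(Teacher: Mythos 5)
Your proof is correct. Note that the paper does not actually prove this theorem -- it is quoted in the Preliminaries with citations to the literature -- so there is no in-paper argument to compare against; your argument (the identity between the linear hull of the Lipschitz image and the linear hull of the image via $f(0)=0$, plus the coordinate-functional decomposition on the finite-dimensional span, whose functionals are automatically bounded) is the standard one and is exactly what the cited sources do. The two points you flag as needing care are indeed the right ones, and both are handled correctly.
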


In Hilbert spaces (and not in Banach spaces), every compact operator is approximable by finite rank
operators in the operator norm \cite{FABIAN}. Following is the definition of approximable
operator for Lipschitz maps.
  \begin{definition}\cite{JIMENEZSEPUILCREMOISES}
  	If $(\mathcal{M}, 0)$ is a pointed metric space and $\mathcal{X}$ is a Banach space, then a Lipschitz function $f:\mathcal{M}\rightarrow \mathcal{X}$ such that $f(0)=0$ is said to be Lipschitz approximable if it is the limit in the Lipschitz norm of a sequence of Lipschitz finite rank operators from  $\mathcal{M}$ to  $\mathcal{X}$.
  \end{definition}
  \begin{theorem}\cite{JIMENEZSEPUILCREMOISES}\label{LIPSCHITZAPPROMABLEISCOMPACT}
  	Every Lipschitz approximable operator from pointed metric space $(\mathcal{M}, 0)$ to  a Banach space $\mathcal{X}$ is Lipschitz compact.
  \end{theorem}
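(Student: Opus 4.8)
The plan is to deduce the theorem from two facts: every Lipschitz finite rank operator is Lipschitz compact, and the collection of Lipschitz compact operators is closed in $\operatorname{Lip}_0(\mathcal{M},\mathcal{X})$ for the Lipschitz norm. Granting these, if $f$ is Lipschitz approximable then $f(0)=0$ and $f$ is a Lipschitz-norm limit of Lipschitz finite rank operators, each of which is Lipschitz compact, so $f$ lies in the (closed) set of Lipschitz compact operators.

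First I would check that a Lipschitz finite rank operator $g:\mathcal{M}\to\mathcal{X}$ with $g(0)=0$ is Lipschitz compact. By Theorem \ref{LIPSCHITCOMPACTIFFLINEAR} the linear hull $V$ of the Lipschitz image of $g$ is a finite dimensional subspace of $\mathcal{X}$, and for all $x\neq y$ in $\mathcal{M}$,
\begin{align*}
\left\|\frac{g(x)-g(y)}{d(x,y)}\right\|\leq \|g\|_{\operatorname{Lip}_0}<\infty .
\end{align*}
Hence the Lipschitz image of $g$ is a bounded subset of the finite dimensional space $V$, so it is relatively compact by the Heine--Borel property, i.e. $g$ is Lipschitz compact.

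The core of the argument is the closedness step, which I expect to be the only mildly delicate point. Let $f:\mathcal{M}\to\mathcal{X}$ with $f(0)=0$ be Lipschitz approximable and choose Lipschitz finite rank operators $f_n$ with $\|f-f_n\|_{\operatorname{Lip}_0}\to 0$. I would show the Lipschitz image $K$ of $f$ is totally bounded. Fix $\varepsilon>0$ and pick $n$ with $\|f-f_n\|_{\operatorname{Lip}_0}<\varepsilon/2$. For every $x\neq y$,
\begin{align*}
\left\|\frac{f(x)-f(y)}{d(x,y)}-\frac{f_n(x)-f_n(y)}{d(x,y)}\right\|=\frac{\|(f-f_n)(x)-(f-f_n)(y)\|}{d(x,y)}\leq\|f-f_n\|_{\operatorname{Lip}_0}<\frac{\varepsilon}{2},
\end{align*}
so $K$ is contained in the $\varepsilon/2$-neighbourhood of the Lipschitz image $K_n$ of $f_n$. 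Since $f_n$ is Lipschitz compact, $K_n$ is relatively compact, hence totally bounded, so it is covered by finitely many balls of radius $\varepsilon/2$; enlarging each to radius $\varepsilon$ yields a finite $\varepsilon$-net for $K$. Thus $K$ is totally bounded.

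Finally, since $\mathcal{X}$ is a Banach space (hence complete), the closure of the totally bounded set $K$ is compact, which means exactly that $f$ is Lipschitz compact. The hard part is essentially just this transfer of total boundedness from $K_n$ to $K$ through the uniform Lipschitz-norm estimate; the remainder is a direct invocation of Theorem \ref{LIPSCHITCOMPACTIFFLINEAR} together with standard facts about totally bounded sets in complete metric spaces.
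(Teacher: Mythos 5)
The paper states this theorem without proof, quoting it from the cited reference, so there is no in-paper argument to compare against; your proof is correct and is essentially the standard one. Both halves check out: a Lipschitz finite rank operator is Lipschitz compact because its Lipschitz image is a bounded subset of a finite dimensional subspace (Heine--Borel), and Lipschitz compactness passes to Lipschitz-norm limits because the uniform bound
\begin{align*}
\left\|\frac{(f-f_n)(x)-(f-f_n)(y)}{d(x,y)}\right\|\leq\|f-f_n\|_{\operatorname{Lip}_0}
\end{align*}
places the Lipschitz image of $f$ inside an $\varepsilon/2$-neighbourhood of the totally bounded Lipschitz image of $f_n$, so it is totally bounded and, by completeness of $\mathcal{X}$, relatively compact.
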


 \section{Multipliers for  Lipschitz p-Bessel sequences in metric
spaces and its properties}\label{MULTIPLIERSSEC}
 We first define the notion of frames for metric spaces.
 \begin{definition}\label{FIRST}(p-frame for metric space)
 	Let $(\mathcal{M},d)$,  $(\mathcal{N}_n,d_n)$, $1\leq n < \infty$ be  
metric
spaces. A collection $\{f_n\}_{n}$ of Lipschitz functions,  $f_n:\mathcal{M}
\to \mathcal{N}_n$ is said to be a Lipschitz p-frame ($1\leq p <\infty$) for  $\mathcal{M}$ relative to
$\{\mathcal{N}_n\}_n$ if 
 	there exist $a,b>0$ such that 
 	\begin{align*}
 	a\,d(x,y)\leq \left(\sum_{n=1}^{\infty}d_n(f_n(x),
f_n(y))^p\right)^\frac{1}{p}\leq b\,d(x,y),\quad \forall x, y \in \mathcal{M}.
 	\end{align*}
 	If $a$ is allowed to take the value 0, then we say that $\{f_n\}_{n}$  a  Lipschitz p-Bessel sequence for  $\mathcal{M}$.
 \end{definition}
  \begin{definition}\label{SECOND}(p-frame for metric space w.r.t. scalars)
  	Let $\mathcal{M}$ be a metric space. A collection $\{f_n\}_{n}$ of Lipschitz functions from  $\mathcal{M}$ to  $\mathbb{K}$ is said to be a Lipschitz p-frame for  $\mathcal{M}$ if there exist $a,b>0$ such that 
  	\begin{align*}
  	a\,d(x,y)\leq \left(\sum_{n=1}^{\infty}|f_n(x)-f_n(y)|^p\right)^\frac{1}{p}\leq b\,d(x,y),\quad \forall x, y \in \mathcal{M}.
  	\end{align*}
  \end{definition}

  \begin{definition}\label{THIRD}(p-frame for a pointed metric space w.r.t. scalars)
  	Let $(\mathcal{M}, 0)$ be a pointed metric space. A collection $\{f_n\}_{n}$ in $\operatorname{Lip}_0(\mathcal{M}, \mathbb{K})$ is said to be a pointed Lipschitz p-frame for  $\mathcal{M}$ if there exist $a,b>0$ such that 
  	\begin{align*}
  	a\,d(x,y)\leq \left(\sum_{n=1}^{\infty}|f_n(x)-f_n(y)|^p\right)^\frac{1}{p}\leq b\,d(x,y),\quad \forall x, y \in \mathcal{M}.
  	\end{align*}
  	
  \end{definition}
  \begin{definition}\label{LAST}
  	Let $(\mathcal{M}, 0)$ be a pointed metric space. A collection $\{\tau_n\}_{n}$ in $\mathcal{M}$ is said to be a pointed Lipschitz p-frame for  $\operatorname{Lip}_0(\mathcal{M}, \mathbb{K})$ if there exist $a,b>0$ such that 
  	\begin{align*}
  	a\,\|f-g\|_{\operatorname{Lip}}\leq \left(\sum_{n=1}^{\infty}|f(\tau_n)-g(\tau_n)|^p\right)^\frac{1}{p}\leq b\|f-g\|_{\operatorname{Lip}},\quad \forall f, g \in \operatorname{Lip}_0(\mathcal{M}, \mathbb{K}).
  	\end{align*}
  	 \end{definition}
  \begin{remark}
  \begin{enumerate}[\upshape(i)]
  \item Definition \ref{FIRST} even generalizes the notion
of bi-Lipschitz embedding (Ribe program) of metric spaces (we refer
\cite{OSTROVSKII, HEINONEN, SEMMES, NAOR1, NAOR2} for more on bi-Lipschitz
embedding) (in fact, we see this by taking a fixed point $z\in \mathcal{N}$ and
defining $f_n(x)=z, \forall x \in  \mathcal{N}$ and $\forall n >1$). It may
happen that a metric space $\mathcal{M}$ may not embed in another metric space
$\mathcal{N}$ through bi-Lipschitz map. But it may have frames. We give
examples to  illustrate these things after this remark.
  		\item By taking $y=0$ and using $f_n(0)=0$, for all $n \in
\mathbb{N}$, we see from Definition \ref{SECOND} that 
  			\begin{align*}
  			a\,d(x,0)\leq \left(\sum_{n=1}^{\infty}|f_n(x)|^p\right)^\frac{1}{p}\leq b\,d(x,0),\quad \forall x \in \mathcal{M}.
  			\end{align*}
  			In particular, if $\mathcal{M}$ is a Banach space, then 
  			\begin{align*}
  			a\|x\|\leq
\left(\sum_{n=1}^{\infty}|f_n(x)|^p\right)^\frac{1}{p}\leq b\|x\|,\quad \forall
x\in \mathcal{M}.
  			\end{align*}
  			Similarly by taking $g=0$ in Definition \ref{LAST}, we
see that 
  				\begin{align*}
  				a\,\|f\|_{\operatorname{Lip}_0}\leq
\left(\sum_{n=1}^{\infty}|f(\tau_n)|^p\right)^\frac{1}{p}\leq
b\|f\|_{\operatorname{Lip}_0},\quad \forall f \in
\operatorname{Lip}_0(\mathcal{M}, \mathbb{K}).
  				\end{align*}
  		\item If $\mathcal{M}$ is a Banach space and $f_n$'s are all
bounded linear functionals, then Definition \ref{FIRST} becomes (i) in
Definition \ref{FRAMEDEFINITION}.
  	\item Since we only  want the definition of 
Lipschitz p-Bessel sequence, we do not address further properties of Lipschitz
frames for metric spaces  in this paper. However, we make a detailed study of frames for  metric spaces in \cite{MAHESHSAM}.

  \end{enumerate}
  \end{remark}
Let $x, y $ be distinct reals and consider $\mathcal{M}=\{x,y\}$ as a metric
subspace of $\mathbb{R}$. Then $\mathcal{M}$ does not embed in metric spaces 
$\mathcal{N}_1=\{x\}$ or $\mathcal{N}_2=\{y\}$. Define $f_1(x)=f_1(y)=x$ and
$f_2(x)=f_2(y)=y$. Then $f_1$ and $f_2$ are Lipschitz and
$|f_1(x)-f_1(y)|^p+|f_2(x)-f_2(y)|^p=2|x-y|^p$. Hence $\{f_1, f_2\}$ is a
Lipschitz  p-frame for 
$\mathcal{M}$.\\
As another example, consider $m, n \in \mathbb{N}$ with $m<n$. Since a
bi-Lipschitz map is injective and continuous, and there is no continuous
injection from $\mathbb{R}^n$ to $\mathbb{R}^m$ (Corollary 2B.4 in \cite{HATCHER}) it follows that
$\mathbb{R}^n$ cannot be embedded in $\mathbb{R}^m$. Now define
$f_j:\mathbb{R}^n \ni (x_1, \dots, x_n) \mapsto (x_j, x_{j+1}, \dots, x_n, x_1,
\dots ,  x_{m-n+j-1})\in \mathbb{R}^m$ for $1\leq j \leq n$. Then $f_j$ is
Lipschitz for all $1\leq j\leq n$ and 
\begin{align*}
 \sum_{j=1}^n\|f_j(x_1, \dots, x_n)-f_j(y_1, \dots, y_n)\|^p=m\|(x_1, \dots,
x_n)-(y_1, \dots, y_n)\|^p, \\~\forall (x_1, \dots, x_n), (y_1, \dots, y_n) \in
\mathbb{R}^n.
\end{align*}
 Thus  $\{f_1, f_2, \dots, f_n\}$ is a Lipscitz p-frame for
$\mathbb{R}^n$.\\
We next give an example which uses infinite number of Lipschitz functions. Let $1<a<b<\infty.$ Let us take $\mathcal{M}\coloneqq[a,b]$ and define $f_n:\mathcal{M}\to \mathbb{R}$ by 
\begin{align*}
f_0(x)&=1, \quad \forall x \in \mathcal{M}\\
f_n(x)&=\frac{(\log x)^n}{n!}, \quad \forall x \in \mathcal{M}, \forall n\geq1.
\end{align*}
 Then $f_n'(x)=\frac{(\log x)^{(n-1)}}{(n-1)!x}$, $\forall x \in \mathcal{M}, \forall n\geq1.$ Hence $f_n'$ is bounded on $\mathcal{M}$, $\forall n\geq1.$ Proposition 2.2.1 in \cite{COBZASMICULESCUNICOLAE} now tells that $f_n$ is a Lipschitz function, for each $n\geq1.$ For $x, y \in \mathcal{M},$ with $x<y$, we now see that 
 \begin{align*}
 \sum_{n=0}^{\infty}|f_n(x)-f_n(y)|&=\sum_{n=0}^{\infty}\frac{(\log
y)^n}{n!}-\sum_{n=0}^{\infty}\frac{(\log x)^n}{n!}\\
&=e^{\log y}-e^{\log x}=y-x=|x-y|.
 \end{align*}
Hence $\{f_n\}_n$ is a Lipschitz 1-frame for $\mathcal{M}$.

We now set a notation which we use in the paper. Let $\mathcal{M}$ be a  metric
space and $\mathcal{X}$ be a  Banach space. Given $f \in
\operatorname{Lip}(\mathcal{M}, \mathbb{K})$ and $\tau \in \mathcal{X}$,
define 
  	$$\tau\otimes f:\mathcal{M} \ni x \mapsto (\tau\otimes f)(x)\coloneqq f(x)\tau \in \mathcal{X}.$$
  Then it follows that $\tau\otimes f$ is a Lipschitz operator and $\operatorname{Lip}(\tau\otimes f)=\|\tau\|\operatorname{Lip}(f)$.
 In his work \cite{SCHATTEN}, Schatten showed that whenever if $\{\lambda_n\}_n \in
\ell^\infty(\mathbb{N})$ and $\{x_n\}_n$, $\{y_n\}_n$ are orthonormal sequences
in a Hilbert space $\mathcal{H}$, then the map in (\ref{FIRST EQUATION}) is a well-defined bounded linear
operator. In 2007, Balazs showed that even if we
take $\{x_n\}_n$, $\{y_n\}_n$ as Bessel sequences, then also $T$ is
well-defined and bounded. In 2010, Rahimi and Balazs \cite{RAHIMIBALAZSMUL} showed that we can
even define operator $T$ in Banach spaces. More precisely the result is
following.
\begin{theorem}\cite{RAHIMIBALAZSMUL}\label{RAHIMIBALAZS}
 Let $\{f_n\}_{n}$   be a
 p-Bessel sequence  for  a Banach space $\mathcal{X}$ with bound $b$ and
$\{\tau_n\}_{n}$   be a
 q-Bessel sequence  for the dual of  a Banach space $\mathcal{Y}$ with bound
$d$, where $q$ is conjugate index of $p$.  If
$\{\lambda_n\}_n \in \ell^\infty(\mathbb{N})$, then the map
  	
  	\begin{align*}
  	T: \mathcal{X} \ni x \mapsto \sum_{n=1}^{\infty}\lambda_n (\tau_n\otimes
f_n) x \in \mathcal{Y}
  	\end{align*}
  	is a well-defined bounded linear   operator with norm 
 at most $bd\|\{\lambda_n\}_n\|_\infty.$
\end{theorem}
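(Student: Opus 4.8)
The plan is to establish well-definedness by showing that, for each fixed $x \in \mathcal{X}$, the partial sums $S_N x \coloneqq \sum_{n=1}^{N} \lambda_n f_n(x)\,\tau_n$ form a Cauchy sequence in the Banach space $\mathcal{Y}$, and then to read off the operator norm bound from the very same estimate. The decisive device is that, since $\mathcal{Y}$ need not be a Hilbert space, the norm of a vector in $\mathcal{Y}$ must be accessed dually: for $y \in \mathcal{Y}$ one has $\|y\| = \sup\{|g(y)| : g \in \mathcal{Y}^*,\ \|g\| \le 1\}$ by Hahn--Banach. Applying this to the finite sum $S_M x - S_N x$ (with $M > N$) gives
\begin{align*}
\|S_M x - S_N x\| = \sup_{\|g\| \le 1} \left| \sum_{n=N+1}^{M} \lambda_n f_n(x)\, g(\tau_n) \right|.
\end{align*}

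Next I would apply H\"older's inequality with the conjugate exponents $p$ and $q$ to the inner sum, bounding it by $\|\{\lambda_n\}_n\|_\infty \left(\sum_{n=N+1}^{M} |f_n(x)|^p\right)^{1/p} \left(\sum_{n=N+1}^{M} |g(\tau_n)|^q\right)^{1/q}$. The second factor is controlled uniformly over $g$ in the unit ball of $\mathcal{Y}^*$ by the q-Bessel bound of $\{\tau_n\}_n$ for $\mathcal{Y}^*$: it is at most $\left(\sum_{n=1}^{\infty} |g(\tau_n)|^q\right)^{1/q} \le d\|g\| \le d$. The first factor is a tail of the series $\sum_{n=1}^{\infty} |f_n(x)|^p$, which converges because the p-Bessel bound of $\{f_n\}_n$ for $\mathcal{X}$ gives $\sum_{n=1}^{\infty} |f_n(x)|^p \le (b\|x\|)^p < \infty$; hence it tends to $0$ as $N \to \infty$. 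Combining these,
\begin{align*}
\|S_M x - S_N x\| \le d\,\|\{\lambda_n\}_n\|_\infty \left(\sum_{n=N+1}^{M} |f_n(x)|^p\right)^{1/p} \xrightarrow[N,M\to\infty]{} 0,
\end{align*}
so $(S_N x)_N$ is Cauchy and $Tx \coloneqq \lim_N S_N x$ is a well-defined element of $\mathcal{Y}$.

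Linearity of $T$ is then immediate, since each map $x \mapsto \lambda_n f_n(x)\,\tau_n$ is linear (the $f_n$ being bounded linear functionals) and pointwise limits of linear maps are linear. For the norm bound I would run the same H\"older estimate starting from $N = 0$: $\|S_M x\| \le d\,\|\{\lambda_n\}_n\|_\infty \left(\sum_{n=1}^{M} |f_n(x)|^p\right)^{1/p} \le bd\,\|\{\lambda_n\}_n\|_\infty\,\|x\|$ for every $M$, and letting $M \to \infty$ yields $\|Tx\| \le bd\,\|\{\lambda_n\}_n\|_\infty\,\|x\|$, i.e. $\|T\| \le bd\,\|\{\lambda_n\}_n\|_\infty$.

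I do not anticipate a genuine obstacle; the only point requiring care is organising the duality step correctly, namely recovering the norm in the \emph{target} space $\mathcal{Y}$ from functionals in $\mathcal{Y}^*$ (so that the q-Bessel hypothesis, which is posed for $\mathcal{Y}^*$, is exactly what the argument consumes) rather than attempting to manipulate $\big\|\sum \lambda_n f_n(x)\,\tau_n\big\|$ directly. Everything else --- H\"older's inequality, convergence of the tail of $\sum |f_n(x)|^p$, and completeness of $\mathcal{Y}$ --- is routine, and it is worth remarking that no appeal is made to any lower frame bound, consistent with only the Bessel (upper) estimates being assumed.
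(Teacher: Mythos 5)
Your proof is correct and follows essentially the same route the paper itself uses: the cited result is not reproved here, but the paper's proof of its Lipschitz analogue (Theorem \ref{DEFINITIONEXISTENCE}) runs the identical argument --- recover the norm in the target space via Hahn--Banach duality, apply H\"older's inequality with the conjugate exponents, and invoke the q-Bessel bound for $\{\tau_n\}_n$ and the p-Bessel bound for $\{f_n\}_n$ to get both convergence of the partial sums and the bound $bd\|\{\lambda_n\}_n\|_\infty$. Your explicit Cauchy-tail formulation of well-definedness matches the paper's treatment of the partial sums $\sum_{k=n}^{m}$, so there is nothing to add.
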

We now derive Theorem \ref{RAHIMIBALAZS} in non-linear sense.
  \begin{theorem}\label{DEFINITIONEXISTENCE}
  Let $\{f_n\}_{n}$ in $\operatorname{Lip}_0(\mathcal{M}, \mathbb{K})$  be a
pointed Lipschitz p-Bessel sequence  for a pointed metric space $(\mathcal{M},
0)$ with bound $b$ and $\{\tau_n\}_{n}$ in a Banach space $\mathcal{X}$  be a
pointed Lipschitz q-Bessel sequence for $\operatorname{Lip}_0(\mathcal{X},
\mathbb{K})$ with bound $d$.  If
$\{\lambda_n\}_n \in \ell^\infty(\mathbb{N})$, then the map
  	
  	\begin{align*}
  	T: \mathcal{M} \ni x \mapsto \sum_{n=1}^{\infty}\lambda_n (\tau_n\otimes
f_n) x \in \mathcal{X}
  	\end{align*}
  	is a well-defined Lipschitz  operator such that $T0=0$ with Lipschitz
norm at most $bd\|\{\lambda_n\}_n\|_\infty.$
  \end{theorem}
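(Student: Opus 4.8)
The plan is to mimic the linear argument of Rahimi–Balazs (Theorem~\ref{RAHIMIBALAZS}), but everywhere replace norms of vectors by distances $d(x,y)$ and use the defining inequalities of the Lipschitz $p$- and $q$-Bessel conditions in place of the linear Bessel conditions. First I would fix $x,y\in\mathcal{M}$ and, for each $N$, form the partial sum $T_N(x)-T_N(y)=\sum_{n=1}^N \lambda_n\big(f_n(x)-f_n(y)\big)\tau_n$, which lies in $\mathcal{X}$. To estimate its norm I would pass to the dual: choose $\phi\in\mathcal{X}^*$ with $\|\phi\|=1$ attaining $\|T_N(x)-T_N(y)\|$ (Hahn–Banach), so that
\begin{align*}
\|T_N(x)-T_N(y)\| = \left|\sum_{n=1}^N \lambda_n\big(f_n(x)-f_n(y)\big)\phi(\tau_n)\right|.
\end{align*}
Here $\phi$, being bounded linear, is in particular an element of $\operatorname{Lip}_0(\mathcal{X},\mathbb{K})$ with $\|\phi\|_{\operatorname{Lip}_0}\le\|\phi\|=1$, so the pointed Lipschitz $q$-Bessel condition on $\{\tau_n\}_n$ (used in the form with $g=0$, as noted in the Remark) gives $\big(\sum_n |\phi(\tau_n)|^q\big)^{1/q}\le d$.

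Next I would apply Hölder's inequality with conjugate exponents $p,q$:
\begin{align*}
\left|\sum_{n=1}^N \lambda_n\big(f_n(x)-f_n(y)\big)\phi(\tau_n)\right|
\le \|\{\lambda_n\}_n\|_\infty \left(\sum_{n=1}^N |f_n(x)-f_n(y)|^p\right)^{1/p}\left(\sum_{n=1}^N |\phi(\tau_n)|^q\right)^{1/q}.
\end{align*}
The first factor after $\|\{\lambda_n\}_n\|_\infty$ is bounded by $b\,d(x,y)$ by the pointed Lipschitz $p$-Bessel condition on $\{f_n\}_n$, and the last factor is bounded by $d$. Hence $\|T_N(x)-T_N(y)\|\le bd\,\|\{\lambda_n\}_n\|_\infty\, d(x,y)$ uniformly in $N$. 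The same estimate applied to a tail $\sum_{n=M+1}^{N}$, together with the fact that $\big(\sum_{n>M}|f_n(x)-f_n(y)|^p\big)^{1/p}\to 0$ as $M\to\infty$ (a convergent $p$-series, since the full sum is $\le b\,d(x,y)$), shows $\{T_N(x)\}_N$ is Cauchy in the Banach space $\mathcal{X}$ for each fixed $x$ (take $y=0$ and use $f_n(0)=0$), so $T(x)$ is well defined; and $T(0)=\sum_n \lambda_n f_n(0)\tau_n = 0$. Passing to the limit $N\to\infty$ in the displayed bound yields $\|T(x)-T(y)\|\le bd\,\|\{\lambda_n\}_n\|_\infty\,d(x,y)$, i.e.\ $T$ is Lipschitz with $\operatorname{Lip}(T)\le bd\,\|\{\lambda_n\}_n\|_\infty$.

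The only genuinely delicate point is the appeal to the dual space: one must be careful that a norming functional $\phi\in\mathcal{X}^*$ is legitimately usable in the Lipschitz $q$-Bessel inequality for $\{\tau_n\}_n$, which is stated for $\operatorname{Lip}_0(\mathcal{X},\mathbb{K})$ rather than for $\mathcal{X}^*$; this is fine because every bounded linear functional is Lipschitz with $\|\phi\|_{\operatorname{Lip}_0}=\|\phi\|$, and $\phi(\tau_n)-0\cdot(\tau_n) = \phi(\tau_n)$. One could alternatively avoid Hahn–Banach entirely by viewing $T_N(x)-T_N(y)$ through the canonical embedding $\mathcal{X}\hookrightarrow\mathcal{X}^{**}$ and testing against the unit ball of $\mathcal{X}^*$, but the Hahn–Banach route is cleanest. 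Everything else is a routine repetition of the Hölder-and-tail argument, so I would present it compactly.
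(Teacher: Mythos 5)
Your proposal is correct and follows essentially the same route as the paper: estimate the partial sums by dualizing ($\|v\|=\sup_{\|\phi\|\le 1}|\phi(v)|$, or equivalently a Hahn--Banach norming functional), apply H\"{o}lder with exponents $p,q$, invoke the two Bessel bounds (noting that a norm-one $\phi\in\mathcal{X}^*$ lies in $\operatorname{Lip}_0(\mathcal{X},\mathbb{K})$ with Lipschitz norm at most one), and deduce convergence from the vanishing tails of the convergent $p$-series. The subtle point you flag about feeding linear functionals into the Lipschitz $q$-Bessel inequality is exactly the step the paper uses implicitly, so nothing is missing.
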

  \begin{proof}
  	Let $n , m \in \mathbb{N}$ with $n \leq m$. Then for each $x \in
\mathcal{M}$, using Holder's inequality,

  	\begin{align*}
  	\left\|\sum_{k=n}^{m}\lambda_k(\tau_k\otimes f_k)(x)\right\|&=\left\|\sum_{k=n}^{m}\lambda_k f_k(x)\tau_k\right\|=\sup_{\phi \in \mathcal{X}^*,\|\phi\|\leq 1}\left|\phi\left(\sum_{k=n}^{m}\lambda_k f_k(x)\tau_k\right)\right|\\
  	&=\sup_{\phi \in \mathcal{X}^*,\|\phi\|\leq 1}\left|\sum_{k=n}^{m}\lambda_k f_k(x)\phi(\tau_k)\right|\\
  	&\leq\sup_{\phi \in \mathcal{X}^*,\|\phi\|\leq 1}\sum_{k=n}^{m}|\lambda_k| |f_k(x)||\phi(\tau_k)|\\
  	&\leq \sup_{n \in \mathbb{N}}|\lambda_n|\sup_{\phi \in \mathcal{X}^*,\|\phi\|\leq 1}\sum_{k=n}^{m} |f_k(x)||\phi(\tau_k)|\\
  	&\leq \sup_{n \in \mathbb{N}}|\lambda_n|\sup_{\phi \in \mathcal{X}^*,\|\phi\|\leq 1}\left(\sum_{k=n}^{m}|f_k(x)|^p\right)^\frac{1}{p}\left(\sum_{k=n}^{m}|\phi(\tau_k)|^q\right)^\frac{1}{q}\\
  	&\leq \sup_{n \in \mathbb{N}}|\lambda_n|\sup_{\phi \in
\mathcal{X}^*,\|\phi\|\leq
1}\left(\sum_{k=n}^{m}|f_k(x)|^p\right)^\frac{1}{p}d\|\phi\|\\
  	&=d\sup_{n \in \mathbb{N}}|\lambda_n|\left(\sum_{k=n}^{m}|f_k(x)|^p\right)^\frac{1}{p}.
  	\end{align*}
  	Since $\left(\sum_{k=1}^{\infty}|f_k(x)|^p\right)^\frac{1}{p}$
converges, $\sum_{k=1}^{\infty}\lambda_k(\tau_k\otimes f_k)(x)$ also converges.
Now for all $x,y \in \mathcal{M}$,

  	\begin{align*}
  	\|Tx-Ty\|&=\left\|\sum_{n=1}^{\infty}\lambda_n f_n(x)\tau_n-\sum_{n=1}^{\infty}\lambda_n f_n(y)\tau_n\right\|=\left\|\sum_{n=1}^{\infty}\lambda_n (f_n(x)-f_n(y))\tau_n\right\|\\
  	&=\sup_{\phi \in \mathcal{X}^*,\|\phi\|\leq 1}\left|\phi\left(\sum_{n=1}^{\infty}\lambda_n (f_n(x)-f_n(y))\tau_k\right)\right|\\
  	&=\sup_{\phi \in \mathcal{X}^*,\|\phi\|\leq 1}\left|\sum_{n=1}^{\infty}\lambda_n (f_n(x)-f_n(y))\phi(\tau_k)\right|\\
  	&\leq \sup_{n \in \mathbb{N}}|\lambda_n|\sup_{\phi \in \mathcal{X}^*,\|\phi\|\leq 1}\left(\sum_{n=1}^{\infty}|f_n(x)-f_n(y)|^p\right)^\frac{1}{p}\left(\sum_{n=1}^{\infty}|\phi(\tau_n)|^q\right)^\frac{1}{q}\\
  	&\leq \sup_{n \in \mathbb{N}}|\lambda_n|\sup_{\phi \in \mathcal{X}^*,\|\phi\|\leq 1}\left(\sum_{n=1}^{\infty}|f_n(x)-f_n(y)|^p\right)^\frac{1}{p}d\|\phi\|\\
  	&=d\sup_{n \in
\mathbb{N}}|\lambda_n|\left(\sum_{n=1}^{\infty}|f_n(x)-f_n(y)|^p\right)^\frac{1}
{p}\leq bd\sup_{n \in \mathbb{N}}|\lambda_n|d(x,y).
  	\end{align*}
  
  Hence 
  \begin{align*}
  \|T\|_{\operatorname{Lip}_0}=\sup_{x, y \in \mathcal{M}, x\neq y}
\frac{\|Tx-Ty\|}{d(x,y)}\leq bd\sup_{n \in \mathbb{N}}|\lambda_n|.
  \end{align*}
  \end{proof}
\begin{corollary}
  Let $\{f_n\}_{n}$ in $\operatorname{Lip}(\mathcal{M}, \mathbb{K})$  be a
 Lipschitz p-Bessel sequence  for  a metric space $\mathcal{M}$ with
bound $b$ and $\{\tau_n\}_{n}$ in a Banach space $\mathcal{X}$  be a
pointed Lipschitz q-Bessel sequence for $\operatorname{Lip}_0(\mathcal{X},
\mathbb{K})$ with bound $d$.  If
$\{\lambda_n\}_n \in \ell^\infty(\mathbb{N})$, then for fixed $z \in
\mathcal{M}$, the map
  	
  	\begin{align*}
  	T: \mathcal{M} \ni x \mapsto \sum_{n=1}^{\infty}\lambda_n (\tau_n\otimes
(f_n-f(z)) )x \in \mathcal{X}
  	\end{align*}
  	is a well-defined Lipschitz  operator  with Lipschitz
number at most $bd\|\{\lambda_n\}_n\|_\infty.$
\end{corollary}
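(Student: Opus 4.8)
The plan is to deduce this immediately from Theorem \ref{DEFINITIONEXISTENCE} by relocating the base point of $\mathcal{M}$ to $z$ and replacing the functions $f_n$ by their ``$z$-centered'' versions. Concretely, for each $n$ set $g_n \coloneqq f_n - f_n(z)$, that is, $g_n : \mathcal{M} \ni x \mapsto f_n(x) - f_n(z) \in \mathbb{K}$. Since $g_n$ differs from $f_n$ by an additive constant, $g_n$ is Lipschitz with $\operatorname{Lip}(g_n) = \operatorname{Lip}(f_n)$ and $g_n(z) = 0$; hence, regarding $(\mathcal{M}, z)$ as a pointed metric space, $\{g_n\}_n$ lies in $\operatorname{Lip}_0(\mathcal{M}, \mathbb{K})$. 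Note also that $\tau_n \otimes (f_n - f_n(z)) = \tau_n \otimes g_n$ as maps $\mathcal{M} \to \mathcal{X}$, so the operator in the statement is literally $T = \sum_{n=1}^\infty \lambda_n(\tau_n \otimes g_n)$.

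Next I would check that $\{g_n\}_n$ is a pointed Lipschitz $p$-Bessel sequence for $(\mathcal{M}, z)$ with the same bound $b$. This is immediate: for all $x, y \in \mathcal{M}$ and all $n$ one has $|g_n(x) - g_n(y)| = |f_n(x) - f_n(y)|$, so
\begin{align*}
\left(\sum_{n=1}^{\infty}|g_n(x) - g_n(y)|^p\right)^{\frac{1}{p}} = \left(\sum_{n=1}^{\infty}|f_n(x) - f_n(y)|^p\right)^{\frac{1}{p}} \leq b\, d(x,y),
\end{align*}
which is precisely the upper (Bessel) inequality of Definition \ref{THIRD} for the pointed space $(\mathcal{M}, z)$. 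The hypothesis on $\{\tau_n\}_n$ — being a pointed Lipschitz $q$-Bessel sequence for $\operatorname{Lip}_0(\mathcal{X}, \mathbb{K})$ with bound $d$ — is exactly the one needed, and is left untouched.

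Now apply Theorem \ref{DEFINITIONEXISTENCE} to the pointed metric space $(\mathcal{M}, z)$, the sequence $\{g_n\}_n \subseteq \operatorname{Lip}_0(\mathcal{M}, \mathbb{K})$, the sequence $\{\tau_n\}_n$, and the symbol $\{\lambda_n\}_n \in \ell^\infty(\mathbb{N})$. The theorem yields that
\begin{align*}
T : \mathcal{M} \ni x \mapsto \sum_{n=1}^{\infty}\lambda_n(\tau_n \otimes g_n)x = \sum_{n=1}^{\infty}\lambda_n\bigl(\tau_n \otimes (f_n - f_n(z))\bigr)x \in \mathcal{X}
\end{align*}
is a well-defined Lipschitz operator with $Tz = 0$ and $\|T\|_{\operatorname{Lip}_0} \leq bd\,\|\{\lambda_n\}_n\|_\infty$. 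Since the Lipschitz number $\operatorname{Lip}(T) = \sup_{x \neq y}\|Tx - Ty\|/d(x,y)$ is the same quantity as this Lipschitz norm, the stated bound follows.

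The argument has essentially no obstacle; the only point requiring (trivial) attention is the observation that the Bessel inequality involves only the differences $f_n(x) - f_n(y)$ and is therefore invariant under adding a constant to each $f_n$, which is what lets us pass from the semi-normed setting of $\operatorname{Lip}(\mathcal{M}, \mathbb{K})$ (where the $f_n$ need not vanish anywhere) to the pointed setting $\operatorname{Lip}_0(\mathcal{M}, \mathbb{K})$ required by Theorem \ref{DEFINITIONEXISTENCE}. If one prefers to avoid changing the base point, the computation in the proof of Theorem \ref{DEFINITIONEXISTENCE} can instead be rerun verbatim with $f_n$ replaced by $g_n$: only the quantities $|g_n(x)|$ (bounded, for the convergence step, by applying the Bessel sum to the pair $(x, z)$) and $|g_n(x) - g_n(y)|$ enter, and both are controlled exactly as before.
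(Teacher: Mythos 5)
Your proof is correct and is essentially identical to the paper's: both define $g_n = f_n - f_n(z)$, observe that the Bessel inequality depends only on differences and hence transfers with the same bound $b$ to the pointed space $(\mathcal{M},z)$, and then invoke Theorem \ref{DEFINITIONEXISTENCE}. (You also correctly read the statement's $f(z)$ as the typo it is for $f_n(z)$.)
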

\begin{proof}
 Define $g_n\coloneqq f_n-f(z), \forall n \in \mathbb{N}$. Then for all $x, y 
\in \mathcal{M}$,
$\left(\sum_{n=1}^{\infty}|g_n(x)-g_n(y)|^p\right)^\frac{1}{p}=\left(\sum_{n=1}^
{\infty}|f_n(x)-f_n(y)|^p\right)^\frac{1}{p}\leq b\,d(x,y)$. Hence $\{g_n\}_{n}$
is a 
 Lipschitz p-Bessel sequence  for  pointed metric space $(\mathcal{M},
z)$ and we apply Theorem \ref{DEFINITIONEXISTENCE} to $\{g_n\}_{n}$.
\end{proof}

   \begin{definition}\label{DEFINITION}
   	Let $\{f_n\}_{n}$ in $\operatorname{Lip}_0(\mathcal{M}, \mathbb{K})$  be
a pointed Lipschitz p-Bessel sequence  for a pointed metric space
$(\mathcal{M},
0)$ and $\{\tau_n\}_{n}$ in a Banach space $\mathcal{X}$  be a pointed Lipschitz
q-Bessel sequence for $\operatorname{Lip}_0(\mathcal{X}, \mathbb{K})$.  Let 
$\{\lambda_n\}_n \in \ell^\infty(\mathbb{N})$. The Lipschitz operator
   	\begin{align*}
   	M_{\lambda,f, \tau}\coloneqq  \sum_{n=1}^{\infty}\lambda_n  (\tau_n\otimes f_n)
   	\end{align*}
   	is called as the Lipschitz $(p,q)$-Bessel multiplier. The sequence $\{\lambda_n\}_n$ is called as symbol for $	M_{\lambda,f, \tau}.$
   	\end{definition} 
   We easily see that 	Definition \ref{DEFINITION} generalizes Definition 3.2
in \cite{RAHIMIBALAZSMUL}. By varying the symbol and fixing other parameters in
the multiplier we get map from $\ell^\infty(\mathbb{N})$ to  $\operatorname{Lip}_0(\mathcal{M}, \mathcal{X})$. Property of
this map for Hilbert space was derived by Balazs (Lemma in \cite{BALAZSBASIC})
and for Banach spaces it is due to Rahimi and Balazs (Proposition 3.3 in 
\cite{RAHIMIBALAZSMUL}). In the next proposition we study it in the context of
metric spaces.
  \begin{proposition}
  	Let $\{f_n\}_{n}$ in $\operatorname{Lip}_0(\mathcal{M}, \mathbb{K})$  be
a pointed Lipschitz p-Bessel sequence  for  $(\mathcal{M},0)$  with non-zero
elements, $\{\tau_n\}_{n}$ in $\mathcal{X}$  be a  q-Riesz sequence  for 
$\operatorname{Lip}_0(\mathcal{X}, \mathbb{K})$ and $\{\lambda_n\}_n \in
\ell^\infty(\mathbb{N})$. Then the mapping 
  	\begin{align*}
  	T:\ell^\infty(\mathbb{N})\ni \{\lambda_n\}_n \mapsto M_{\lambda,f, \tau}
\in \operatorname{Lip}_0(\mathcal{M}, \mathcal{X})
  	\end{align*}
  	is a well-defined injective bounded linear operator.	
   \end{proposition}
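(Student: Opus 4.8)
The plan is to check four things in sequence --- that $T$ is well defined, linear, bounded, and injective --- with the first three resting on Theorem~\ref{DEFINITIONEXISTENCE} and only the last requiring a genuinely new argument.

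First I would dispose of well-definedness and the norm bound. A $q$-Riesz sequence for $\operatorname{Lip}_0(\mathcal{X},\mathbb{K})$ is, after discarding its lower estimate, a pointed Lipschitz $q$-Bessel sequence for $\operatorname{Lip}_0(\mathcal{X},\mathbb{K})$ with Bessel bound $d$; together with the hypothesis that $\{f_n\}_n$ is a pointed Lipschitz $p$-Bessel sequence for $(\mathcal{M},0)$ with bound $b$, this is exactly the setting of Theorem~\ref{DEFINITIONEXISTENCE}. Hence, for every $\{\lambda_n\}_n\in\ell^\infty(\mathbb{N})$, the operator $T\{\lambda_n\}_n=M_{\lambda,f,\tau}$ belongs to $\operatorname{Lip}_0(\mathcal{M},\mathcal{X})$ and satisfies $\|T\{\lambda_n\}_n\|_{\operatorname{Lip}_0}\le bd\,\|\{\lambda_n\}_n\|_\infty$, so $T$ is well defined and, once linearity is in hand, bounded with $\|T\|\le bd$. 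Linearity itself is routine: for $x\in\mathcal{M}$ the $N$-th partial sum of $\sum_n(\alpha\lambda_n+\beta\mu_n)f_n(x)\tau_n$ splits as $\alpha\sum_{n\le N}\lambda_nf_n(x)\tau_n+\beta\sum_{n\le N}\mu_nf_n(x)\tau_n$, and since each of these two series converges in $\mathcal{X}$ (this is established inside the proof of Theorem~\ref{DEFINITIONEXISTENCE}), passing to the limit gives $T(\alpha\{\lambda_n\}_n+\beta\{\mu_n\}_n)=\alpha T\{\lambda_n\}_n+\beta T\{\mu_n\}_n$.

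The substantive point is injectivity, and this is where the two so-far-unused hypotheses --- that the $f_n$ are non-zero and that $\{\tau_n\}_n$ is a Riesz \emph{sequence} rather than merely Bessel --- enter. By linearity it is enough to show $\ker T=\{0\}$, so suppose $M_{\lambda,f,\tau}=0$, that is, $\sum_{n=1}^{\infty}\lambda_n f_n(x)\tau_n=0$ for every $x\in\mathcal{M}$. Fix $m$. Applying the lower inequality in \eqref{RIESZSEQUENCEINEQUALITY} to finite scalar families whose $m$-th entry is $1$ gives $\big\|\tau_m-\sum_{n\ne m}c_n\tau_n\big\|\ge a\big(1+\sum_{n\ne m}|c_n|^q\big)^{1/q}\ge a$ for every finitely supported family $(c_n)_{n\ne m}$, so $\tau_m\notin\overline{\operatorname{span}}\{\tau_n:n\ne m\}$. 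By the Hahn--Banach theorem I can then choose $\phi_m\in\mathcal{X}^*$ with $\phi_m(\tau_m)=1$ and $\phi_m(\tau_n)=0$ for all $n\ne m$. Applying the bounded linear functional $\phi_m$ term by term to the convergent series gives $0=\phi_m\big(\sum_{n=1}^{\infty}\lambda_n f_n(x)\tau_n\big)=\lambda_m f_m(x)$ for every $x\in\mathcal{M}$; since $f_m$ is not identically zero there is an $x$ with $f_m(x)\ne 0$, whence $\lambda_m=0$. As $m$ was arbitrary, $\{\lambda_n\}_n=0$, so $T$ is injective.

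The hard part is precisely the injectivity step, and within it the production of the separating functionals $\phi_m$: this is the only place the Riesz lower bound (which forces $\tau_m$ off the closed span of the remaining $\tau_n$) and the non-vanishing of the $f_n$ are actually used, while everything else reduces to Theorem~\ref{DEFINITIONEXISTENCE} and elementary manipulation of convergent series.
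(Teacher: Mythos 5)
Your proof is correct, and on the only substantive point --- injectivity --- it takes a genuinely different route from the paper. The paper argues directly: from $\sum_{n}(\lambda_n-\mu_n)f_n(x)\tau_n=0$ it applies the lower bound in \eqref{RIESZSEQUENCEINEQUALITY} to the coefficient sequence $\{(\lambda_n-\mu_n)f_n(x)\}_n$ itself, getting
$a\left(\sum_{n}|(\lambda_n-\mu_n)f_n(x)|^q\right)^{1/q}\leq 0$
and hence the vanishing of every coefficient in one stroke, before invoking $f_n\neq 0$ exactly as you do. You instead use the lower Riesz bound to show $\operatorname{dist}(\tau_m,\overline{\operatorname{span}}\{\tau_n\}_{n\neq m})\geq a$ and manufacture biorthogonal functionals $\phi_m$ via Hahn--Banach, then test the series against them. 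Both arguments are valid and use the same two hypotheses in the same roles; the paper's is shorter but silently extends the finite-subset inequality \eqref{RIESZSEQUENCEINEQUALITY} to an infinite convergent series (a routine limit argument it does not spell out), whereas yours only ever invokes the inequality on finite sums and, as a by-product, produces an explicit biorthogonal system with $\|\phi_m\|\leq 1/a$. You also write out linearity, which the paper omits.

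One caveat you share with the paper: both proofs reduce well-definedness and boundedness to Theorem \ref{DEFINITIONEXISTENCE}, which requires $\{\tau_n\}_n$ to be a pointed Lipschitz $q$-Bessel sequence for $\operatorname{Lip}_0(\mathcal{X},\mathbb{K})$, i.e.\ $\left(\sum_n|\phi(\tau_n)|^q\right)^{1/q}\leq d\|\phi\|$. Your claim that this follows from the upper $q$-Riesz estimate by ``discarding the lower bound'' is not immediate: the upper bound in \eqref{RIESZSEQUENCEINEQUALITY} says the synthesis map $\ell^q\to\mathcal{X}$ is bounded, whose adjoint gives $\left(\sum_n|\phi(\tau_n)|^p\right)^{1/p}\leq b\|\phi\|$ with the \emph{conjugate} exponent, not $q$. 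The paper glosses over the same point (``from the norm estimate \dots''), so this is an issue with the proposition's hypotheses rather than with your argument specifically, but you should not present the implication as if it were a matter of merely dropping an inequality.
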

  \begin{proof}
  From the norm estimate of $M_{\lambda,f, \tau}$, we see that  $T$ is a 
well-defined bounded linear operator. Let $\{\lambda_n\}_n, \{\mu_n\}_n \in
\ell^\infty(\mathbb{N})$ be such that $M_{\lambda,f, \tau}
=T\{\lambda_n\}_n=T\{\mu_n\}_n=M_{\mu,f, \tau} $. Then
$\sum_{n=1}^{\infty}\lambda_n  f_n (x)\tau_n =M_{\lambda,f, \tau}x =M_{\mu,f,
\tau}x=\sum_{n=1}^{\infty}\mu_n  f_n (x)\tau_n $, $\forall x \in
\mathcal{M}$ $\Rightarrow$ $\sum_{n=1}^{\infty}(\lambda_n-\mu_n)  f_n
(x)\tau_n=0$,  $\forall x \in \mathcal{M}$. Now using Inequality
(\ref{RIESZSEQUENCEINEQUALITY}), 
  	\begin{align*}
  	&a \left(\sum_{n=1 }^\infty|(\lambda_n-\mu_n)  f_n
(x)|^q\right)^\frac{1}{q}\leq \left\|\sum_{n=1}^\infty (\lambda_n-\mu_n)  f_n
(x)\tau_n\right\|=0, \quad \forall x \in \mathcal{M}\\
  &\implies (\lambda_n-\mu_n)  f_n (x)=0, \quad \forall n \in \mathbb{N}, 
\forall x \in \mathcal{M}.
  	\end{align*}
  	Let  $n \in \mathbb{N}$ be fixed. Since $f_n\neq 0$, there exists $x \in
\mathcal{M}$ such that $f_n(x)\neq 0$. Therefore we get $\lambda_n-\mu_n=0$. By
varying $n \in \mathbb{N}$ we arrive at $\lambda_n=\mu_n$, $\forall n \in
\mathbb{N}$. Hence $T$ is injective.
  \end{proof}
 The result that a norm-limit of finite rank linear operators (between Banach spaces)  is a compact operator \cite{FABIAN} was generalized to Lipschitz operators in  Theorem \ref{LIPSCHITZAPPROMABLEISCOMPACT} by Jimenez-Vargas, Sepulcre, and Villegas-Vallecillos \cite{JIMENEZSEPUILCREMOISES}. Using Theorem \ref{LIPSCHITZAPPROMABLEISCOMPACT} we can generalize Lemma 3.6 in \cite{RAHIMIBALAZSMUL}.
  \begin{proposition}
  	Let $\{f_n\}_{n}$ in $\operatorname{Lip}_0(\mathcal{M}, \mathbb{K})$  be
a pointed Lipschitz p-Bessel sequence  for  $(\mathcal{M}, 0)$ with bound $b$
and $\{\tau_n\}_{n}$ in $\mathcal{X}$  be a pointed Lipschitz q-Bessel sequence 
for $\operatorname{Lip}_0(\mathcal{X}, \mathbb{K})$ with bound $d$. 	If
$\{\lambda_n\}_n \in c_0(\mathbb{N})$, then $M_{\lambda,f, \tau}$ is a Lipschitz
compact operator.
  \end{proposition}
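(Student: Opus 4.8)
The plan is to show that $M_{\lambda,f,\tau}$ is Lipschitz approximable, i.e.\ a limit in the Lipschitz norm of Lipschitz finite rank operators, and then invoke Theorem~\ref{LIPSCHITZAPPROMABLEISCOMPACT}. For each $N\in\mathbb{N}$ define the partial multiplier $T_N\coloneqq\sum_{n=1}^{N}\lambda_n(\tau_n\otimes f_n)$. Each $T_N$ maps $\mathcal{M}$ into the finite dimensional subspace $\operatorname{span}\{\tau_1,\dots,\tau_N\}$ of $\mathcal{X}$, vanishes at $0$, and by Theorem~\ref{LIPSCHITCOMPACTIFFLINEAR} it is a Lipschitz finite rank operator (it already has the required form $\sum_{k=1}^{N}\lambda_k f_k(\cdot)\tau_k$ with $\lambda_k f_k\in\operatorname{Lip}_0(\mathcal{M},\mathbb{K})$). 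So it remains to prove $\|M_{\lambda,f,\tau}-T_N\|_{\operatorname{Lip}_0}\to 0$ as $N\to\infty$.

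For the norm estimate, fix $x,y\in\mathcal{M}$ and repeat verbatim the duality computation in the proof of Theorem~\ref{DEFINITIONEXISTENCE}, but applied to the tail: writing $M_{\lambda,f,\tau}x-T_Nx-(M_{\lambda,f,\tau}y-T_Ny)=\sum_{n=N+1}^{\infty}\lambda_n(f_n(x)-f_n(y))\tau_n$, taking $\sup_{\|\phi\|\le 1}|\phi(\cdot)|$, applying H\"older with exponents $p,q$, and using the $q$-Bessel bound $d$ for $\{\tau_n\}_n$ in $\operatorname{Lip}_0(\mathcal{X},\mathbb{K})$, one gets
\begin{align*}
\|(M_{\lambda,f,\tau}-T_N)x-(M_{\lambda,f,\tau}-T_N)y\|
\le d\,\sup_{n>N}|\lambda_n|\left(\sum_{n=N+1}^{\infty}|f_n(x)-f_n(y)|^p\right)^{\frac1p}
\le bd\,\bigl(\sup_{n>N}|\lambda_n|\bigr)\,d(x,y),
\end{align*}
where the last step uses that $\{f_n\}_n$ is a $p$-Bessel sequence with bound $b$, so the full sum (hence the tail, being dominated by it) is at most $b\,d(x,y)$. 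Dividing by $d(x,y)$ and taking the supremum over $x\ne y$ yields $\|M_{\lambda,f,\tau}-T_N\|_{\operatorname{Lip}_0}\le bd\sup_{n>N}|\lambda_n|$.

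Since $\{\lambda_n\}_n\in c_0(\mathbb{N})$, we have $\sup_{n>N}|\lambda_n|\to 0$ as $N\to\infty$, so $T_N\to M_{\lambda,f,\tau}$ in the Lipschitz norm. Hence $M_{\lambda,f,\tau}$ is Lipschitz approximable, and Theorem~\ref{LIPSCHITZAPPROMABLEISCOMPACT} gives that it is Lipschitz compact. The only mild subtlety—the main thing to get right—is the inequality $\sum_{n=N+1}^{\infty}|f_n(x)-f_n(y)|^p\le b^p\,d(x,y)^p$: this is immediate because the partial sums are nondecreasing and bounded above by the full Bessel sum $\le b^p d(x,y)^p$, so no separate argument is needed. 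Everything else is a direct transcription of the estimate already carried out in Theorem~\ref{DEFINITIONEXISTENCE}.
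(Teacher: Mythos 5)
Your proposal is correct and follows exactly the paper's own argument: truncate to the partial multiplier $\sum_{n=1}^{N}\lambda_n(\tau_n\otimes f_n)$, which is Lipschitz finite rank by Theorem~\ref{LIPSCHITCOMPACTIFFLINEAR}, bound the tail by $bd\sup_{n>N}|\lambda_n|$ via the same H\"older/duality estimate as in Theorem~\ref{DEFINITIONEXISTENCE}, and conclude Lipschitz compactness from Theorem~\ref{LIPSCHITZAPPROMABLEISCOMPACT}. No differences worth noting.
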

  \begin{proof}
  	For each  $m \in \mathbb{N}$, define $	M_{\lambda_m,f, \tau}\coloneqq   \sum_{n=1}^{m}\lambda_n  (\tau_n\otimes f_n )$. Then $	M_{\lambda_m,f, \tau}$ is a Lipschitz finite rank operator (from Theorem \ref{LIPSCHITCOMPACTIFFLINEAR}). Now 
  	\begin{align*}
  	 \|M_{\lambda_m,f, \tau}-M_{\lambda,f,
\tau}\|_{\operatorname{Lip}_0}&=\sup_{x, y \in \mathcal{M}, x\neq y}
\frac{\|(M_{\lambda_m,f, \tau}-M_{\lambda,f, \tau})x-(M_{\lambda_m,f,
\tau}-M_{\lambda,f, \tau})y\|}{d(x,y)}\\
  	 &=\sup_{x, y \in \mathcal{M}, x\neq y}
\frac{\left\|\sum_{n=m+1}^{\infty}\lambda_n  f_n
(x)\tau_n-\sum_{n=m+1}^{\infty}\lambda_n  f_n (y)\tau_n\right\|}{d(x,y)}\\
  	 &=\sup_{x, y \in \mathcal{M}, x\neq y}
\frac{\left\|\sum_{n=m+1}^{\infty}\lambda_n  (f_n
(x)-f_n(y))\tau_n\right\|}{d(x,y)}\\
  	 &\leq bd\sup_{m+1\leq n<\infty }|\lambda_n| \to 0 \text{ as } m \to \infty.
  	\end{align*}
  	Hence $M_{\lambda,f, \tau}$ is the limit of a sequence
of Lipschitz finite rank operators $\{M_{\lambda_m,f, \tau}\}_{m=1}^\infty$ with respect to the Lipschitz norm. Thus $M_{\lambda,f, \tau}$ is Lipschitz approximable and from Theorem
\ref{LIPSCHITZAPPROMABLEISCOMPACT} it follows that $M_{\lambda,f, \tau}$ is
Lipschitz compact.
\end{proof}
   We now study the properties of multiplier by changing its parameters.
These are known as continuity properties of multipliers in the literature.
Following result extends Theorem 5.1 in \cite{RAHIMIBALAZSMUL}.
  \begin{theorem}
  Let $\{f_n\}_{n}$ in $\operatorname{Lip}_0(\mathcal{M}, \mathbb{K})$  be a
pointed Lipschitz p-Bessel sequence  for  $\mathcal{M}$ with bound $b$ and
$\{\tau_n\}_{n}$ in $\mathcal{X}$  be a pointed Lipschitz q-Bessel sequence  for
 $\operatorname{Lip}_0(\mathcal{X}, \mathbb{K})$ with bound $d$ and
$\{\lambda_n\}_n \in \ell^\infty(\mathbb{N})$. 	Let $k \in \mathbb{N}$ and let 
$\lambda^{(k)}=\{\lambda_1^{(k)},\lambda_2^{(k)}, \dots \}$,
$\lambda=\{\lambda_1,\lambda_2, \dots \}$, 
  $\tau^{(k)}=\{\tau_1^{(k)}, \tau_2^{(k)}, \dots\}$,
$\tau_n^{k} \in \mathcal{X}$, $\tau=\{\tau_1, \tau_2, \dots\}$. Assume that for
each $k$, $\lambda^{(k)}\in \ell^\infty(\mathbb{N})$  and
$\tau^{(k)}$  is  a pointed Lipschitz q-Bessel sequence  for
 $\operatorname{Lip}_0(\mathcal{X}, \mathbb{K})$.
  	\begin{enumerate}[\upshape(i)]
  		\item If $\lambda^{(k)} \to \lambda $ as $k \rightarrow \infty $ in p-norm,  then
  		\begin{align*}
  		\|M_{\lambda^{(k)},f, \tau}-M_{\lambda,f, \tau}\|_{\operatorname{Lip}_0} \to 0 \text{ as } k \to \infty.
  		\end{align*}
 \item If $\{\lambda_n\}_n \in \ell^p(\mathbb{N})$ and  $\sum_{n=1}^{\infty}\|\tau_n^{(k)}-\tau_n\|^q \to 0 \text{ as } k \to \infty $, then 
  		 \begin{align*}
  		 \|M_{\lambda, f, \tau^{(k)}}-M_{\lambda,f, \tau}\|_{\operatorname{Lip}_0} \to 0 \text{ as } k \to \infty.
  		 \end{align*}
 \end{enumerate}
  \end{theorem}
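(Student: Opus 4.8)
The plan is to mimic the norm estimate in the proof of Theorem \ref{DEFINITIONEXISTENCE}, applied to the \emph{difference} of two multipliers, and then to exhibit in each case a null sequence that dominates the Lipschitz norm. Throughout I write $q$ for the conjugate index of $p$. The key observation is that for any sequence $\{\mu_n\}_n$ and any $q$-Bessel sequence $\{\sigma_n\}_n$ for $\operatorname{Lip}_0(\mathcal{X},\mathbb{K})$ with bound $D$, the same Hölder computation as in Theorem \ref{DEFINITIONEXISTENCE} gives, for all $x,y\in\mathcal{M}$,
\begin{align*}
\left\|\sum_{n=1}^\infty \mu_n\big(f_n(x)-f_n(y)\big)\sigma_n\right\|
\;\le\; D\left(\sum_{n=1}^\infty |\mu_n|^? \right)^{?}\cdots
\end{align*}
so I will want to split the estimate so that the $\mu_n$'s are paired against the $p$-Bessel bound $b$ of $\{f_n\}_n$ in the right way.

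For part (i): set $\mu_n=\lambda_n^{(k)}-\lambda_n$ and $\sigma_n=\tau_n$, so $M_{\lambda^{(k)},f,\tau}-M_{\lambda,f,\tau}=\sum_n \mu_n(\tau_n\otimes f_n)$. Estimating $\|(M_{\lambda^{(k)},f,\tau}-M_{\lambda,f,\tau})x-(\cdots)y\|$ by passing to $\sup_{\|\phi\|\le 1}$, pulling $\phi$ inside the sum, and applying Hölder to the pair $\big(|\mu_n||f_n(x)-f_n(y)|\big)_n$ versus $\big(|\phi(\tau_n)|\big)_n$ — but this time keeping $|\mu_n|$ grouped with $|f_n(x)-f_n(y)|$ — I get a bound of the form $d\big(\sum_n |\mu_n|^{p'} |f_n(x)-f_n(y)|^{p'}\big)^{1/p'}$ for a suitable exponent; the cleanest route is instead to use the crude bound $|\mu_n|\le \|\lambda^{(k)}-\lambda\|_p$ is \emph{not} available, so I would rather factor as $\sum_n |\mu_n||f_n(x)-f_n(y)||\phi(\tau_n)| \le \|\lambda^{(k)}-\lambda\|_{\ell^\infty}$-type steps fail. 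The correct move: apply Hölder with three factors, or simply dominate $\sum_n|\mu_n||f_n(x)-f_n(y)||\phi(\tau_n)|$ by first Hölder ($\ell^\infty$–$\ell^1$ is too weak), so use $\|\mu\|_{\ell^p}$ against $\big(\sum_n(|f_n(x)-f_n(y)||\phi(\tau_n)|)^q\big)^{1/q}$, and then bound the inner sum: since $\{f_n\}$ is $p$-Bessel and $\{\tau_n\}$ is $q$-Bessel one shows $\big(\sum_n(|f_n(x)-f_n(y)||\phi(\tau_n)|)^q\big)^{1/q}\le$ something like $(b\,d(x,y))\cdot(d\|\phi\|)$ up to the right normalization — this is the main technical point and will need care with which Bessel inequality controls which index. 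Granting it, I conclude $\|M_{\lambda^{(k)},f,\tau}-M_{\lambda,f,\tau}\|_{\operatorname{Lip}_0}\le C\,b\,d\,\|\lambda^{(k)}-\lambda\|_{\ell^p}\to 0$.

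For part (ii): now set $M_{\lambda,f,\tau^{(k)}}-M_{\lambda,f,\tau}=\sum_n \lambda_n\big(\tau_n^{(k)}-\tau_n\big)\otimes f_n$, so the relevant vectors are $\tau_n^{(k)}-\tau_n$. Repeating the Theorem \ref{DEFINITIONEXISTENCE} estimate with $\phi\in\mathcal{X}^*$, $\|\phi\|\le 1$, I bound $\|(M_{\lambda,f,\tau^{(k)}}-M_{\lambda,f,\tau})x-(\cdots)y\|$ by $\sup_\phi \sum_n |\lambda_n||f_n(x)-f_n(y)||\phi(\tau_n^{(k)}-\tau_n)|$, then Hölder to get $\le \sup_\phi \big(\sum_n |\lambda_n|^? |f_n(x)-f_n(y)|^?\big)^{?}\big(\sum_n|\phi(\tau_n^{(k)}-\tau_n)|^q\big)^{1/q}$. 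Here $\{\tau_n^{(k)}-\tau_n\}_n$ need not be Bessel, but $\big(\sum_n|\phi(\tau_n^{(k)}-\tau_n)|^q\big)^{1/q}\le \big(\sum_n\|\tau_n^{(k)}-\tau_n\|^q\big)^{1/q}\|\phi\|$ by $|\phi(v)|\le\|\phi\|\|v\|$; call $\varepsilon_k:=\big(\sum_n\|\tau_n^{(k)}-\tau_n\|^q\big)^{1/q}\to 0$. For the first factor I use $\{\lambda_n\}\in\ell^p$ paired with the $p$-Bessel bound of $\{f_n\}$: $\big(\sum_n|\lambda_n|^p|f_n(x)-f_n(y)|^p\big)^{1/p}$ is not directly $\le \|\lambda\|_\infty\, b\, d(x,y)$ with a decaying constant, so instead bound $\sum_n|\lambda_n||f_n(x)-f_n(y)||\phi(v_n)| \le \|\lambda\|_{\ell^p}\big(\sum_n(|f_n(x)-f_n(y)||\phi(v_n)|)^q\big)^{1/q}$ and then $\le \|\lambda\|_{\ell^p}\,(b\,d(x,y))\,\varepsilon_k\|\phi\|$, again using that $|f_n(x)-f_n(y)|\le b\,d(x,y)$ uniformly together with $\sum\|v_n\|^q<\infty$. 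Dividing by $d(x,y)$ and taking the supremum yields $\|M_{\lambda,f,\tau^{(k)}}-M_{\lambda,f,\tau}\|_{\operatorname{Lip}_0}\le b\,\|\lambda\|_{\ell^p}\,\varepsilon_k\to 0$.

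The main obstacle in both parts is the same: organizing the Hölder estimates so that the decaying quantity ($\|\lambda^{(k)}-\lambda\|_{\ell^p}$ in (i), $\varepsilon_k$ in (ii)) comes out as a genuine prefactor while the remaining sums are controlled uniformly in $x,y$ by the Bessel bounds $b,d$. I expect the honest argument routes through the three-factor inequality $\sum_n|\alpha_n||\beta_n||\gamma_n|\le \|\alpha\|_{\ell^p}\,\|\beta\|_{\ell^\infty}\,\|\gamma\|_{\ell^{q}}\cdot(\text{count})$ replaced by the sharper $\sum_n|\alpha_n||\beta_n\gamma_n|\le\|\alpha\|_{\ell^p}\big(\sum_n|\beta_n\gamma_n|^q\big)^{1/q}$ followed by a single application of the pointwise bound $|\beta_n|=|f_n(x)-f_n(y)|\le b\,d(x,y)$; once that is set up, everything else is the routine $\sup_{\|\phi\|\le1}$ duality and division by $d(x,y)$ exactly as in Theorem \ref{DEFINITIONEXISTENCE}.
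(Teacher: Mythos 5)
Your estimates are correct and both parts reach the paper's conclusions with the same final bounds ($bd\,\|\lambda^{(k)}-\lambda\|_p$ in (i) and $b\,\|\lambda\|_p\,\varepsilon_k$ in (ii)), but the route differs from the paper's in part (i), and your meta-commentary there contains a false claim worth correcting. The paper's proof of (i) is a two-line reduction: the multiplier with symbol $\mu=\lambda^{(k)}-\lambda$ already satisfies $\|M_{\mu,f,\tau}\|_{\operatorname{Lip}_0}\le bd\,\|\mu\|_\infty$ by Theorem \ref{DEFINITIONEXISTENCE}, and then one simply uses $\|\mu\|_\infty\le\|\mu\|_p$. Your assertion that the bound $|\mu_n|\le\|\mu\|_{\ell^p}$ ``is not available'' and that the ``$\ell^\infty$-type steps fail'' is wrong: $|\mu_n|^p\le\sum_m|\mu_m|^p$ gives exactly that bound, and the $\ell^\infty$ route is precisely what the paper uses. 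Your replacement argument — H\"{o}lder pairing $\|\mu\|_{\ell^p}$ against $\bigl(\sum_n(|f_n(x)-f_n(y)||\phi(\tau_n)|)^q\bigr)^{1/q}$, then the pointwise bound $|f_n(x)-f_n(y)|\le b\,d(x,y)$ (which does follow from the $p$-Bessel inequality term-by-term) and the $q$-Bessel bound on $\sum_n|\phi(\tau_n)|^q$ — is nevertheless valid and yields the same constant, so the step you flag as ``the main technical point'' does go through; it is just more work than necessary. For part (ii) your argument is essentially the paper's: both isolate $\sum_n\lambda_n(f_n(x)-f_n(y))\phi(\tau_n^{(k)}-\tau_n)$, apply H\"{o}lder, and use $|\phi(v)|\le\|\phi\|\|v\|$ to produce the factor $\varepsilon_k=\bigl(\sum_n\|\tau_n^{(k)}-\tau_n\|^q\bigr)^{1/q}$; the only difference is that you extract $|f_n(x)-f_n(y)|\le b\,d(x,y)$ pointwise inside the $\ell^q$ factor, whereas the paper keeps $|\lambda_n|^p|f_n(x)-f_n(y)|^p$ together in the $\ell^p$ factor and splits it via $\sum_n a_nb_n\le(\sum_na_n)(\sum_nb_n)$. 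To turn this into a finished proof you should commit to one grouping in each part and delete the exploratory hedging.
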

  \begin{proof}
  \begin{enumerate}[\upshape(i)]
  		\item Using Theorem \ref{DEFINITIONEXISTENCE},
  		\begin{align*}
  		&\|M_{\lambda^{(k)},f, \tau}-M_{\lambda,f,
\tau}\|_{\operatorname{Lip}_0}\\&=\sup_{x, y \in \mathcal{M}, x\neq y}
\frac{\|(M_{\lambda^{(k)},f, \tau}-M_{\lambda,f, \tau})x-(M_{\lambda^{(k)},f,
\tau}-M_{\lambda,f, \tau})y\|}{d(x,y)}\\
  		&=\sup_{x, y \in \mathcal{M}, x\neq y}
\frac{\left\|\sum_{n=1}^{\infty}(\lambda_n^{(k)}-\lambda_n)f_n(x)\tau_n-\sum_{
n=1}^{\infty}(\lambda_n^{(k)}-\lambda_n)f_n(y)\tau_n\right\|}{d(x,y)}\\
  		&=\sup_{x, y \in \mathcal{M}, x\neq y}
\frac{\left\|\sum_{n=1}^{\infty}(\lambda_n^{(k)}
-\lambda_n)(f_n(x)-f_n(y))\tau_n\right\|}{d(x,y)}\\
  	  &\leq bd \sup_{n\in \mathbb{N}}|\lambda_n^{(k)}-\lambda_n|=bd \|\{\lambda_n^{(k)}-\lambda_n\}_n\|_\infty \\
  		& \leq bd \|\{\lambda_n^{(k)}-\lambda_n\}_n\|_p \to 0 \text{ as } k \to \infty.
  		\end{align*}
  		\item Using Holder's inequality,
  			\begin{align*}
  		&	\|M_{\lambda,f, \tau^{(k)}}-M_{\lambda,f, \tau}\|_{\operatorname{Lip}_0}\\
  		&=\sup_{x, y \in \mathcal{M}, x\neq y} \frac{\|(M_{\lambda,f,
\tau^{(k)}}-M_{\lambda,f, \tau})x-(M_{\lambda,f, \tau^{(k)}}-M_{\lambda,f,
\tau})y\|}{d(x,y)}\\
  			&=\sup_{x, y \in \mathcal{M}, x\neq y}
\frac{\left\|\sum_{n=1}^{\infty}\lambda_nf_n(x)(\tau_n^{(k)}-\tau_n)-\sum_{n=1}^
{\infty}\lambda_nf_n(y)(\tau_n^{(k)}-\tau_n)\right\|}{d(x,y)}\\
  			&=\sup_{x, y \in \mathcal{M}, x\neq y}
\frac{\left\|\sum_{n=1}^{\infty}\lambda_n(f_n(x)-f_n(y))(\tau_n^{(k)}
-\tau_n)\right\|}{d(x,y)}\\
  			&=\sup_{x, y \in \mathcal{M}, x\neq y} \ \ \sup _{\phi \in
\mathcal{X}^*,\|\phi\|\leq 1}
\frac{\left|\sum_{n=1}^{\infty}\lambda_n(f_n(x)-f_n(y))\phi(\tau_n^{(k)}
-\tau_n)\right|}{d(x,y)}\\
  			&\leq \sup_{x, y \in \mathcal{M}, x\neq y} \ \ \sup _{\phi
\in \mathcal{X}^*,\|\phi\|\leq 1}
\frac{\left(\sum_{n=1}^{\infty}|\lambda_n(f_n(x)-f_n(y))|^p\right)^\frac{1}{p}
\left(\sum_{n=1}^{\infty}|\phi(\tau_n^{(k)}-\tau_n)|^q\right)^\frac{1}{q}}{d(x,
y)}\\
  			&\leq \sup_{x, y \in \mathcal{M}, x\neq y} \ \ \sup _{\phi
\in \mathcal{X}^*,\|\phi\|\leq 1}
\frac{\left(\sum_{n=1}^{\infty}|\lambda_n|^p\right)^\frac{1}{p}\left(\sum_{n=1}^
{\infty}|f_n(x)-f_n(y)|^p\right)^\frac{1}{p}\left(\sum_{n=1}^{\infty}
|\phi(\tau_n^{(k)}-\tau_n)|^q\right)^\frac{1}{q}}{d(x,y)}\\
  			&\leq b \|\{\lambda_n\}_n\|_p\left(\sum_{n=1}^{\infty}\|\tau_n^{(k)}-\tau_n\|^q\right)^\frac{1}{q} \to 0 \text{ as } k \to \infty.
  			\end{align*}
  \end{enumerate}	
  \end{proof}

  \section{Acknowledgments}
  The first author thanks the National Institute of Technology Karnataka (NITK), Surathkal for giving him financial support and the present work of the second author was partially supported by National Board for Higher Mathematics (NBHM), Ministry of Atomic Energy, Government of India (Reference
No.2/48(16)/2012/NBHM(R.P.)/R\&D 11/9133).

 \bibliographystyle{plain}

 \bibliography{reference.bib}

\end{document}